\renewcommand{\leq}{\leqslant}
\renewcommand{\geq}{\geqslant}
\newcommand{\bbZ}{\mathbb{Z}}
\newcommand{\bbR}{\mathbb{R}}
\newcommand{\bbS}{\mathbb{S}}
\newcommand{\bbC}{\mathbb{C}}
\newcommand{\bbD}{\mathbb{D}}
\DeclareMathOperator{\diam}{diam}
\DeclareMathOperator{\piz}{\pi_0}
\newcommand{\Mc}{\mathcal{M}}
\theoremstyle{plain}
\newtheorem{theo}{Theorem}[section]
\newtheorem{prop}[theo]{Proposition}
\newtheorem{lemm}[theo]{Lemma}
\newtheorem{coro}[theo]{Corollary}
\theoremstyle{definition}
\newtheorem{defi}[theo]{Definition}
\newtheorem{exam}[theo]{Example}
\newtheorem{rema}[theo]{Remark}
\newtheorem{ques}{Question}
\newtheorem*{clai-nn}{Claim}
\newcommand{\myvcenter}[1]{\ensuremath{\vcenter{\hbox{#1}}}}
\title{\textbf{A numerical scale for non-locally connected planar continua}}
\author[1,2]{Timo Jolivet\thanks{The first author was supported by Agence Nationale de la Recherche
through the project FAN -\emph{Fractals and Numeration} - ANR-12-IS01-0002.}}
\author[3,4]{Beno\^it Loridant\thanks{The second author was supported by the Austrian Science Fund through the FWF Project 22 855 - \emph{Topology of fractal tiles} - and the project FAN -\emph{Fractals and Numeration} - FWF I1136.}}
\author[5]{Jun Luo\thanks{The third author was supported by the Chinese National Natural Science Fundation Project 10971233.}}
\affil[1]{
    Universit\'e Paris Diderot,
    LIAFA Case 7014,
    75205 Paris Cedex 13, France
}
\affil[2]{
    Department of Mathematics,
    University of Turku 20014, Finland
}
\affil[3]{Corresponding author: loridant@dmg.tuwien.ac.at
}
\affil[4]{  Montanuniversit\"at Leoben,
    Franz Josefstrasse 18, Leoben 8700, Austria 
}
\affil[5]{
    School of Mathematics and Computational Science,
    Sun Yat-Sen University, Guangzhou 512075, China
}
\date{\today}
\begin{document}
\maketitle

\begin{abstract}
We introduce a numerical scale to quantify to which extent a planar continuum is not locally connected. For a locally connected continuum, the numerical scale is zero; for a continuum like the topologist's sine curve, the scale is one; for an indecomposable continuum, it is infinite. Among others, we shall pose a new problem that may be of some interest: can we estimate the scale from above for the Mandelbrot set $\Mc$ ?

\textbf{Keywords.} \emph{Local connectedness, fibers, numerical scale.}
\end{abstract}

%%% article text:
%%%%%%%%%%%%%%%%%%%%%%%%%%%%%%%%%%%%%%%%%%%%%%%%%
%%%%%%%%%%%%%%%%%%%%%%%%%%%%%%%%%%%%%%%%%%%%%%%%%
%%%%%%%%%%%%%%%%%%%%%%%%%%%%%%%%%%%%%%%%%%%%%%%%%
\section{Introduction}
\let\thefootnote\relax\footnote{Part of our study was also supported by the FWF Project SFB F50 of the Austrian Science Fund.}
This paper is about planar continua, {\em i.e.}, compact connected sets in the complex plane $\bbC$ or the extended complex plane $\hat{\bbC}$. By the Hahn-Mazurkiewicz theorem, such a continuum is locally connected if and only if it is the image of $[0,1]$ under a continuous map $g:[0,1]\rightarrow\bbC$. In particular, a locally connected continuum is path-connected. 

In the present article, we are interested in measuring how far a planar continuum is from being locally connected. To this effect, we will introduce ``a numerical scale'' which ``quantifies'' the extent to which a planar continuum is not locally connected.

Our work is motivated by possible applications in complex dynamics. After Douady and Hubbard \cite{DH-82,DH-84,DH-85}, complex dynamics becomes ``a focus of interest'' in the late 1980's \cite[page (v)]{CG-93}. The study of iterated polynomials provides many  examples of planar continua. In the dynamical plane, Julia sets $J_c$ of hyperbolic or parabolic polynomials $f_c(z)=z^2+c$ are locally connected; however, $J_c$ is not locally connected if $f_c$ has an irrationally neutral fixed point that does not correspond to a Siegel disk (\cite[Theorem V.4.4]{CG-93}). In the parameter plane, local connectedness of the Mandelbrot set $\Mc$ remains unknown and has been one of the most central problems in complex dynamics.

More recently, Hubbard and Schleicher (\cite[Theorem 6.2]{HS-2012}) study the multicorns $\Mc_d^*$, {\em i.e.}, the set of parameters $c$ for which the Julia set of the anti-holomorphic polynomial $z\mapsto \overline{z}^n+c$ is connected. They show that the multicorns are not path-connected and, hence, not locally connected. The multicorns thus give new examples of planar continua that are not locally connected.

Therefore, a problem of interest will be to estimate the numerical scale we introduce here for typical planar continua, like the Mandelbrot set $\Mc$, multicorns or Julia sets of infinitely renormalizable polynomials $z\mapsto z^2+c$ which are not locally connected.

The paper is organized as follows. In Section~\ref{sec:mainresult}, we recall the notion of \emph{fibers}, our main tool, and define the numerical scale. In Section~\ref{sec:basicprop}, we give some basic properties of fibers. In Sections~\ref{sec:proofmain} and~\ref{sec:DetailedProof}, we discuss the relation between trivial fibers and local connectedness. Finally, we collect in Section~\ref{sec:questions} several questions that may merit some attention.\\

\textbf{Acknowledgments.} The authors are grateful to J\"org Thuswaldner for his suggestions that improved the readability of this paper.
%%%%%%%%%%%%%%%%%%%%%%%%%%%%%%%%%%%%%%%%%%%%%%%%%
%%%%%%%%%%%%%%%%%%%%%%%%%%%%%%%%%%%%%%%%%%%%%%%%%
%%%%%%%%%%%%%%%%%%%%%%%%%%%%%%%%%%%%%%%%%%%%%%%%%
\section{Notions, Main result and Examples}\label{sec:mainresult}

Given a continuum $K\subset\bbC$ with $\bbC\setminus K$ connected, let $\varphi$ be the Riemann mapping sending the exterior of $K$ to the exterior of the unit disk that is normalized so as to fix $\infty$ with positive real derivative. For any $\theta\in\bbS^1=\bbR/\bbZ$, $\displaystyle R_K(\theta)=\varphi^{-1}\left(\left\{re^{2\pi{\bf i}\theta}: r>1\right\}\right)$ is called the {\em external ray of $K$ at angle $\theta$}. An external ray $R_K(\theta)$ is said to {\em land} if its limit set $\overline{R_K(\theta)}\cap K$ is a single point.

In~\cite{Sch99-a} Schleicher fixes a countable set of angles $Q\subset\bbS^1$ such that all the external rays at angles in $Q$ land and defines a {\em separation line} (with respect to $Q$) as: (1) either the closure of the union of two external rays with angles in $Q$ which land at a common point on $K$, (2) or the closure of the union of two such rays which land at different points on $K$, together with a simple curve in the interior of $K$ connecting the two landing points. 
Then, two points $z,z'\in K$ are said to be {\em separated from each other} if there is a separation line $\gamma$ avoiding $z$ and $z'$ such that these two points are in different components of $\bbC\setminus\gamma$. For any point $z\in K$, the component of
$\left\{z'\in K:\ z' \ \text{can\ not\ be\ separated\ from}\ z\right\}$
containing $z$ is called the {\em fiber of $z$}, which is also a continuum whose complement is connected \cite[Lemma 2.4]{Sch99-a}. 

Among other fundamental properties of fibers, Schleicher further shows that if the fiber at $z\in K$ is trivial, {\em i.e.}, consists of a single point, then $K$ is locally connected at $z$~\cite[Proposition 2.9]{Sch99-a}. Moreover, under three additional assumptions on external rays $R_K(\theta)$ with $\theta\in Q$, he also shows that if $K$ is locally connected then all its fibers are trivial~\cite [Proposition 2.10]{Sch99-a}. 

The choice of $Q$ is important in Schleicher's definition of fibers, especially in the study on fibers of Julia sets \cite{Sch99-b} and the Mandelbrot set \cite{Sch04}. 

We will remove the dependence on the choice of $Q\subset\bbS^1$ and define fibers for planar continua whose complement has finitely many components. In this more general setting, we can establish without any further assumptions an equivalence between  ``trivial fibers'' and local connectedness of $K$. 

Following the basic philosophy of ``fibers of fibers'', we continue to define a new numerical scale which quantifies the ``non-local connectedness'' or  the ``deviation from local connectedness'' in a reasonable way.

Throughout this paper, $\piz(X)$ denotes the collection of components of a set $X\subset\bbC$.

\begin{defi}[Good cuts and fibers]
Let $X \subseteq \bbC$ be a continuum such that $\piz(\bbC \setminus X)$ is finite.
\begin{itemize}
\itemsep=0pt
\item     A \emph{good cut of $X$}  is a simple closed curve $\gamma : [0,1] \rightarrow \hat{\bbC}$ such that
    $\gamma \cap \partial X$ is a nonempty finite set
    and such that $\gamma \setminus X$ is nonempty. 

\item Two points $x,y \in X$ are \emph{separated} by a good cut $\gamma$ of $X$
    if they belong to distinct components of $\bbC \setminus \gamma$.

\item  The \emph{pseudo-fiber} $E_x$ of $x \in X$  is the set of all the points $y \in X $ such that $x$ and $y$ are not separated by any good cut of $X$.
    The \emph{fiber} $F_x$ is the component of $E_x$ that contains $x$.
    We say $E_x$ or $F_x$ is \emph{trivial} if it consists of $\{x\}$ only.
\end{itemize}
\end{defi}

We can now state the main result of this paper.
\begin{theo}\label{main}
Let $X \subseteq \bbC$ be a continuum such that $\pi_0(\bbC \setminus X)$ is finite. Then the following assertions are equivalent.
\begin{itemize}
\item[$(i)$] $X $ is locally connected.
\item[$(ii)$] For each $x \in X$, the pseudo-fiber $E_x$ is trivial.
\item[$(iii)$]  For each $x \in X$, the fiber $F_x$ is trivial.
\end{itemize}
\end{theo}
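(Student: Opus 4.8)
The plan is to establish the cycle $(ii) \Rightarrow (iii) \Rightarrow (i) \Rightarrow (ii)$. The implication $(ii) \Rightarrow (iii)$ is immediate, since $F_x \subseteq E_x$ and both contain $x$, so $E_x = \{x\}$ forces $F_x = \{x\}$. The two substantive implications are $(iii) \Rightarrow (i)$, which I would argue by contraposition, and $(i) \Rightarrow (ii)$, where I would directly separate any two distinct points by a good cut.

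For $(iii) \Rightarrow (i)$, suppose $X$ is not locally connected at some $p$. I would first reduce to the boundary: an interior point of $X$ has a disc neighbourhood inside $X$ and is thus a point of local connectedness, so $p \in \partial X$; moreover $\partial X = \bigcup_j \partial U_j$, where $U_0, \dots, U_N$ are the finitely many components of $\bbC \setminus X$, and since a finite union of continua, each locally connected at $p$, is again locally connected at $p$, some $\partial U_j \ni p$ fails to be locally connected at $p$. Working inside the continuum $Y := \partial U_j$, the failure of local connectedness together with boundary bumping yields $\varepsilon > 0$, points $p_n \to p$ lying in pairwise distinct components $K_n$ of $Y \cap \overline{B(p,\varepsilon)}$, each meeting $\partial B(p,\varepsilon)$ and hence of diameter $\geq \varepsilon/2$; a Hausdorff limit of a subsequence is a nondegenerate continuum $N \ni p$ with $N, K_n \subseteq \partial U_j \subseteq \partial X$. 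Now fix any $q \in N \setminus \{p\}$ and suppose, for contradiction, that a good cut $\gamma$ separates $p$ from $q$. For large $n$ the connected set $K_n$ contains points close to $p$ and to $q$ (as $K_n \to N$), hence lies on both open sides of $\gamma$ and must meet $\gamma$; but $K_n \subseteq \partial X$, so this meeting point lies in the finite set $\gamma \cap \partial X$. Since the $K_n$ are pairwise disjoint these meeting points are distinct, giving infinitely many points in a finite set, a contradiction. Thus no good cut separates $p$ from any point of $N$, so the connected set $N$ lies in $E_p$ and therefore in $F_p$, which is non-trivial.

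For $(i) \Rightarrow (ii)$, given $X$ locally connected and $x \neq y$, I would build a separating good cut out of finitely many arcs of two harmless types: crosscuts of a complementary domain $U_j$ (arcs whose interior lies in $U_j \subseteq \bbC \setminus X$) and arcs lying in $\mathrm{int}(X)$. Each such arc meets $\partial X$ only at its two endpoints, so any simple closed curve obtained by concatenating finitely many of them meets $\partial X$ in a finite set and, provided at least one crosscut is used, satisfies $\gamma \setminus X \neq \emptyset$; it is automatically a good cut. By Torhorst's theorem each $\partial U_j$ is a locally connected continuum, so its points are accessible and the relevant crosscuts may be taken of small diameter. Using local connectedness of $X$ I would first choose a small connected neighbourhood of $x$ inside $B(x, |x-y|/2)$ and then encircle it by such a concatenation, routing through $\mathrm{int}(X)$ and through the adjacent complementary domains (including the unbounded one, via $\infty$ if convenient) so that $x$ and $y$ land in different components of $\bbC \setminus \gamma$. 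This separates every pair of distinct points, i.e. $E_x = \{x\}$.

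The main obstacle is the construction in $(i) \Rightarrow (ii)$: assembling the crosscuts and interior arcs into an \emph{embedded} simple closed curve that encloses $x$ within a small region while excluding $y$, keeping the finitely many junction points on $\partial X$. This is precisely the step where Schleicher needed extra hypotheses on the rays in $Q$, and removing them is the crux; the clean observation that a crosscut of a complementary domain meets $\partial X$ only at its endpoints is what keeps $\gamma \cap \partial X$ finite without any such assumption. In $(iii) \Rightarrow (i)$ the only delicate points are the reduction to a single $\partial U_j$ and the extraction of pairwise disjoint approximating continua, both of which feed the counting contradiction above.
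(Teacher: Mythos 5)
Your $(ii)\Rightarrow(iii)$ is fine, but the proposal has a decisive gap at $(i)\Rightarrow(ii)$, which is the substance of the theorem and which you do not actually prove: you describe the intended shape of a separating curve (crosscuts of complementary domains concatenated with arcs in $X^o$) and then concede that assembling these pieces into an \emph{embedded} simple closed curve that encloses $x$, excludes $y$, and meets $\partial X$ finitely often is ``the main obstacle'' and ``the crux''. That obstacle is not a technicality to be deferred --- it \emph{is} the theorem; the observation that a crosscut meets $\partial X$ only at its endpoints keeps $\gamma\cap\partial X$ finite but provides no mechanism for making the concatenation close up, stay embedded, and separate. This is exactly where the paper spends most of its effort: Proposition~\ref{prop:lc->trivial_base} handles the base case $\#\pi_0(\bbC\setminus X)=1$ via Carath\'eodory's extension of the Riemann map (with two cases according to whether the two boundary arcs joining $x$ to $y$ intersect), and Theorem~\ref{theo:lc->trivial} with Section~\ref{sec:DetailedProof} runs an induction on $\#\pi_0(\bbC\setminus X)$ that needs a Sch\"onflies normalization, a brick-wall tiling argument with Torhorst's theorem (Theorem~\ref{TorTheo}), Brown's shrinking theorem to collapse a complementary component when $x$ and $y$ lie on the boundary of the \emph{same} $U_i$, and a gluing map $q$ (whose preservation of local connectedness occupies the whole Appendix) when two boundaries $\partial U_i$, $\partial U_j$ intersect. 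These two configurations --- same boundary, and intersecting boundaries --- are precisely where naive concatenation of crosscuts fails, and your sketch addresses neither.

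In $(iii)\Rightarrow(i)$ your counting argument is correct and genuinely cleaner than the paper's proof of Theorem~\ref{trivial->lc}, \emph{provided} the approximating continua lie in $\partial X$; the unjustified step is the reduction that puts them there. You claim that if $X$ fails to be locally connected at $p$ then some $\partial U_j$ fails at $p$, ``since a finite union of continua, each locally connected at $p$, is again locally connected at $p$''. That union argument only controls $\partial X=\bigcup_j\partial U_j$, not $X$: when $X^o\neq\varnothing$, local connectedness of all the $\partial U_j$ at $p$ gives a small connected neighbourhood $N$ of $p$ in $\partial X$, and one must still attach the components of $X^o\cap B(p,\delta)$ to $N$ --- using that each such component has boundary points strictly inside the ball, which necessarily lie in $\partial X$ and hence in $N$ --- before concluding anything about $X$ at $p$. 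The statement you need (for finitely many complementary domains, local connectedness of every $\partial U_j$ forces local connectedness of $X$; essentially the classical Whyburn--Kuratowski criterion) is true, but it is a lemma to be proved or cited, not a one-line union fact. Note that the paper avoids this reduction by running the Hausdorff-limit argument inside $X$ itself, and pays for it: there the disjoint continua $Q_k$ need not lie in $\partial X$, a good cut can meet them in infinitely many points of $X$ (only $\gamma\cap\partial X$ is finite), and the contradiction must be extracted by showing the intersection points accumulate on one of the finitely many open arcs of $\gamma\setminus\partial X$, which then lies in $X^o$ and illegitimately joins distinct components of $B(x,r)\cap X$. So your route is viable and would simplify this half, but only after the boundary-reduction lemma is honestly established.
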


This leads us to the definition of a numerical scale for planar continua.
\begin{defi}[Higher order fibers]\label{def:order}
We define \emph{higher order fibers} of a continuum $X\subset\mathbb{C}$ with finitely many complementary components by induction.
\begin{itemize}
\item A \emph{fiber of order $0$} is $X$ itself.
\item A \emph{fiber of order $k\geq 1$} is a fiber of the subcontinuum $Y \subseteq X$,
where $Y$ is a fiber of order $k-1$.
\end{itemize}
\end{defi}
\begin{rema}
The consistency of this definition will be the purpose of Proposition~\ref{prop:fibers}: every fiber of a continuum $X\subset\mathbb{C}$ with finitely many complementary components is again a continuum with finitely many complementary components.
\end{rema}

\begin{defi}[Numerical scale]\label{def:numscale}
Let  $X \subseteq \bbC$ be a continuum such that $\pi_0(\bbC \setminus X)$ is finite.
We define $\ell(X)$ as the smallest integer $k$ such that for all $x\in X$, there exist
$$N_0=X\supset N_1\supset\cdots N_{p-1}\supset N_p=\{x\}
$$
for some $p\leq k+1$, where $N_i$ is a fiber of $N_{i-1}$.
If such an integer $k$ does not exist, we write $\ell(X) = \infty$.
\end{defi}

\begin{rema}
The fibers of $X$ are the fibers of order $1$
and $X$ is locally connected if and only if $\ell(X) = 0$ by Theorem~\ref{main}.
\end{rema}

\begin{rema}
Kiwi~\cite{Kiwi-04} considers Julia sets $J$ and defines a modified notion of fibers, based on the topology of $J$ and without mentioning its embedding in $\bbC$. Actually, given a monic polynomial $f$ and its Julia set $J(f)$, let $J_{fin}(f)$ denote the set formed by all periodic and preperiodic points in $J(f)$ which are not in the grand orbit of a Cremer point. If $C$ is a component of $J(f)$ and $Z$ is a finite subset of $J_{fin}(f)$, then $C\setminus Z$ has finitely many components, each of which is an open subset of $C\setminus Z$ \cite[Proposition 2.13]{Kiwi-04}. Given $z\in J(f)$, Kiwi defines the fiber of $z$, denoted as ${\rm Fiber}(z)$, to be the set of points $z'$ that lie in the same component of $J(f)\setminus Z$ for all finite subsets $Z$ of $J_{fin}(f)$ with $\{z,z'\}\cap Z=\emptyset$. We note that if $J(f)$ is connected then $ F_z\subset {\rm Fiber}(z)$, where $F_z$ is the fiber of $J(f)$ at $z$ defined in our paper. The converse containment is currently unclear and suggests an interesting topic that deserves a further study. For instance, we will wonder whether we can extend the notions of fiber and scale so that  (1) Kiwi's and Schleicher's definitions may be included as special cases of ours and (2) more general continua than planar ones can be discussed.
\end{rema}

\begin{exam}[Nontrivial fibers]
Let
\begin{align*}
X & = \{(t,0) : t \in [0,1]\} \ \cup \ \underbrace{\{(0,t) : t \in [0,1]\}}_{\displaystyle =:X_0} \\
  & \ \cup \ \bigcup_{n=0}^\infty \{(2^{-n},t) : t \in [0,1]\} \ \subseteq \ \bbR^2 \\
  & = \ \myvcenter{
\begin{tikzpicture}
\draw[thick] (0,0) -- (5,0);
\foreach \i in {0,...,8}
{
    \draw[thick] (5/2^\i,0) -- (5/2^\i,1);
}
\draw[thick] (0,0) -- (0,1);
\end{tikzpicture}
}\,.
\end{align*}
The pseudo-fiber of $x \in X$ is nontrivial if and only if $x \in X_0$,  because a good cut $\gamma$ can intersect the set $\partial X=X$ only finitely many times.  For each $x\in X_0$,  $E_x = X_0=F_x $. As a consequence, $\ell(X)=1$.
\end{exam}

\begin{exam}[$E_x \neq F_x$]
Let
$$S = \{(t-1, \pm t) : t \in [0,1]\}\;\textrm{ and }\; T = \{(1-t, \pm t) : t \in [0,1]\}.
$$
We define
\begin{align*}
X =& \underbrace{\{(s,t) \in \left [-1,1 \right ]^2 : |s| + |t| \leq 1 \}}_{\displaystyle =:L}
  \ \cup \ \{(t,0) : 1<|t|\leq 2\}\ \cup\\
  & \bigcup_{n=0}^\infty \left(\left\{x-\left(2^{-n},0\right): x\in S\right\}\cup \left\{x+\left(2^{-n},0\right): x\in T\right\}\right) \\
  & =
\myvcenter{
\begin{tikzpicture}[x=2cm,y=2cm]
\draw[thick] (-2,0) -- (-1,0);
\draw[thick] (1,0) -- (2,0);
\foreach \i in {0,...,8}
{
    \draw[thick] (1/2^\i,1) -- (1+1/2^\i,0);
    \draw[thick] (1/2^\i,-1) -- (1+1/2^\i,0);
    \draw[thick] (-1/2^\i,1) -- (-1-1/2^\i,0);
    \draw[thick] (-1/2^\i,-1) -- (-1-1/2^\i,0);
}
\fill[black] (-1,0) -- (0,-1) -- (1,0) -- (0,1);
\end{tikzpicture}
}\,.
\end{align*}
In the above picture, $L$ is the dark square, $S\cup T$ its boundary, and the infinite union is made of the whiskers.
Now, for $x \in X \setminus L$ there exists a simple closed curve $\gamma$ around $x$ such that $\gamma \cap \partial X$ is finite,
so $E_x$ and $F_x$ are trivial. We list the different pseudo-fibers and fibers in the following table.

$$\begin{array}{c|c|c}
x\in\cdots&E_x&F_x\\\hline
X\setminus L&\{x\}&\{x\}\\
L\setminus (S\cup T)&\{(0,1),(0,-1),x\}&\{x\}\\
S\setminus\{(0,1),(0,-1)\}&S&S\\
T\setminus\{(0,1),(0,-1)\}&T&T\\
\{(0,1),(0,-1)\}& L&L \\\hline
\end{array}$$
It follows that, in this example, we again have $\ell(X)=1$.
\end{exam}

\begin{rema}We see in this example a consequence of the condition $\gamma\setminus X \neq \varnothing$ in the definition of a good cut $\gamma$ of $X$. Indeed, if we allowed  $\gamma\subset X$, then the point $(0,1)$ could be separated by a good cut from any point $y$ in $L^o$. Thus we would obtain $E_{(0,1)}=F_{(0,1)}=S\cup T$. With our definition, we have in this example $F_{(0,1)}=L$.
\end{rema}

\begin{exam}\label{CantorEx}
We take two copies of the middle third Cantor set placed at levels $y=\pm1$ and consider $X$ as the union of segments through the point $O=(1/2,0)$ that connect the symmetric points of the Cantor sets about $O$. See the figure below. 
Then,
$F_{O}=X$, while for all $x\ne O$, $F_x$ is a segment containing $x$. Now, if $x\in X$, choosing $N_0=X$, $N_1$ a segment from $O$ to a point on the Cantor sets that contains $x$, and $N_2=\{x\}$ in the definition of the numerical scale, we see that we have $\ell(X)=1$.
\begin{center}
    \includegraphics[height=5cm]{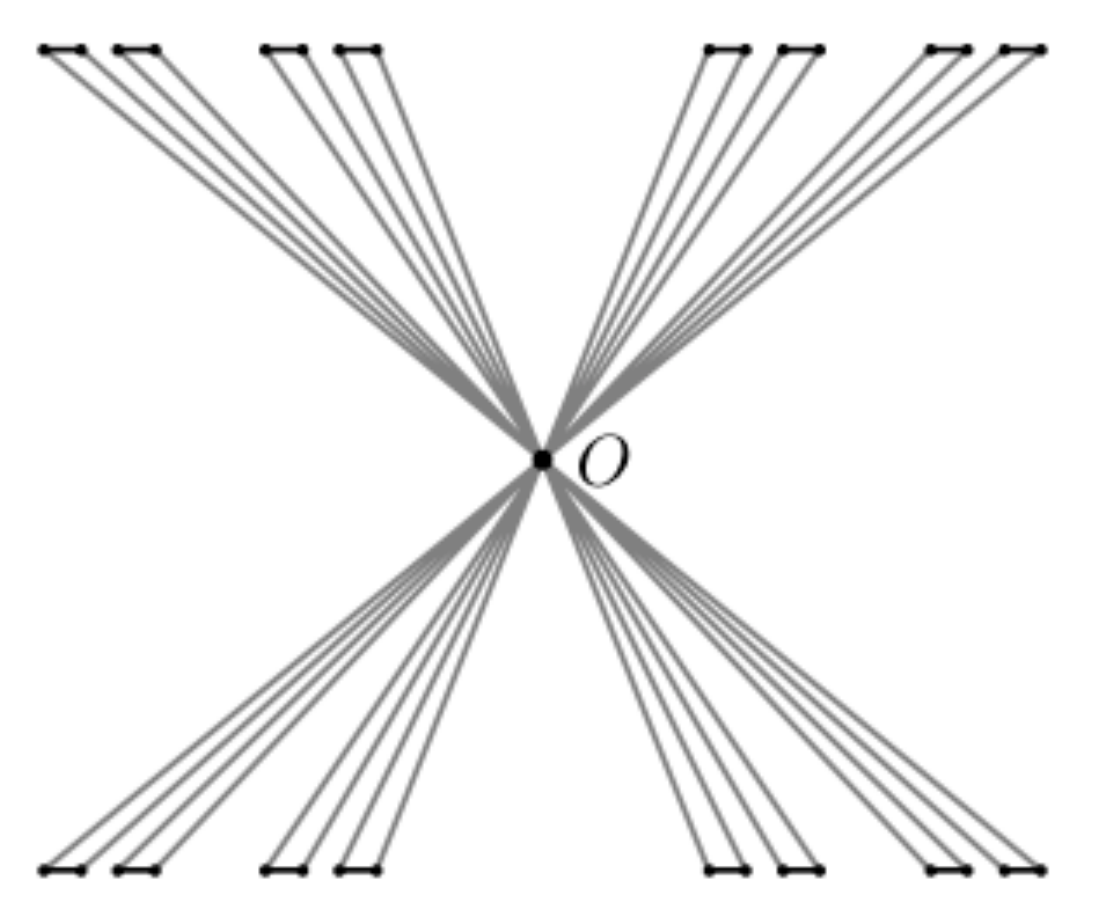}
\end{center}

\end{exam}

In the forthcoming examples, the boundary of the space $X$ has a remarkable property: it contains an indecomposable space. Thus, for $X$ as in Example~\ref{indec1}, we have $\ell(X)=\infty$. We recall the notion of indecomposable space.

\begin{defi}[Indecomposable space, composant] A non-degenerate topological space $X$ is \emph{indecomposable} if it is connected and whenever $X=A\cup B$ with $A,B$ connected, closed subsets of $X$, then $A=X$ or $B=X$ (see~\cite[Section 43, Chapter V]{Kur68}). Moreover, given a non-degenerate continuum $X\subset\bbC$ and a point $x\in X$, the \emph{composant} of $X$ containing $x$ is the union of all the proper sub-continua $N\subset X$ with $x\in N$.
\end{defi}
\begin{lemm} The intersection of an indecomposable space $X$ with a simple closed curve $\gamma$ is either empty or an uncountable set.
\end{lemm}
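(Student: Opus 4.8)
The plan is to argue by contradiction: assume $\gamma \cap X$ is nonempty yet countable and derive a contradiction from the composant structure of the indecomposable continuum $X$. First I would recall the three classical facts (see~\cite{Kur68}): $X$ possesses \emph{uncountably many} composants; distinct composants are disjoint; and each composant is connected (being a union of subcontinua sharing a common point) and \emph{dense} in $X$. I will also use the Jordan curve theorem to write $\bbC \setminus \gamma = U \sqcup V$ with $U,V$ the two complementary domains, so that $\overline{U} = U \cup \gamma$ and $\overline{V} = V \cup \gamma$.

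The core step is then a counting argument. Since the composants are pairwise disjoint and uncountable in number, while $\gamma \cap X$ is countable, all but countably many composants avoid the set $\gamma \cap X$; as every composant is contained in $X$, such a composant $\kappa$ satisfies $\kappa \cap \gamma = \kappa \cap (\gamma \cap X) = \varnothing$. Being connected and disjoint from $\gamma$, each such $\kappa$ lies entirely in $U$ or entirely in $V$. If one such composant lay in $U$ and another in $V$, then the density of each would force $X = \overline{\kappa} \subseteq \overline{U}$ and simultaneously $X \subseteq \overline{V}$, hence $X \subseteq \overline{U} \cap \overline{V} = \gamma$; but a non-degenerate subcontinuum of the simple closed curve $\gamma$ is either an arc or all of $\gamma$, and both are decomposable, contradicting the indecomposability of $X$. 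Therefore all the composants avoiding $\gamma$ lie on a single side, say in $U$, and their density forces $X \subseteq \overline{U}$. In particular $\gamma$ does not separate $X$: no two points of $X$ lie in different components of $\bbC \setminus \gamma$.

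I expect the genuine obstacle to be the final step, namely passing from ``\emph{$\gamma$ does not separate $X$}'' to the stated triviality (emptiness) of $\gamma \cap X$. The composant argument only rules out $\gamma$ separating $X$; it does not by itself prevent $X$ from lying on one side of $\gamma$ while still touching it in a nonempty countable set, so this one-sided contact case is the delicate point and will require a finer local analysis near a contact point $p \in \gamma \cap X$ (for instance, examining whether $X$ can stay squeezed against $\gamma$ near $p$). For the intended applications, however, only the separation conclusion is actually needed: a good cut meets $\partial X$, and hence any indecomposable subcontinuum of $\partial X$, in a \emph{finite} set, so by the argument above such a cut can never separate the indecomposable part. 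I would therefore isolate the separating-curve statement as the working form of the lemma and reduce the stated dichotomy to the two governing cases $\gamma \cap X = \varnothing$ and ``$\gamma$ separates $X$'', treating the residual one-sided case as the main technical hurdle.
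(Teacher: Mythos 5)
Your composant argument is correct, and for what it proves it is actually sharper than the paper's own proof. The paper supposes $X\cap\gamma=\{x_1,\ldots,x_k\}$ is finite, lets $C_i$ be the composant of $x_i$, and claims that every component $P$ of $X\setminus\gamma$ has closure $\overline{P}$ a \emph{proper} subcontinuum containing some $x_i$, so that $X=C_1\cup\cdots\cup C_k$, contradicting the uncountability of the family of composants. Your variant (all but countably many composants miss the countable set $\gamma\cap X$; such a composant is connected and disjoint from $\gamma$, hence lies in one component $U$ of $\bbC\setminus\gamma$, and its density gives $X\subseteq\overline{U}$) bypasses the components of $X\setminus\gamma$ entirely, handles countable and not just finite intersections, and correctly yields the non-separation conclusion.

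The step you flag as the genuine obstacle, however, is not merely delicate: it cannot be completed, because the lemma as stated is false, and the one-sided contact you worry about really occurs. Take $X$ to be the buckethandle in its standard embedding: it lies in the half-plane $\{x\geq 0\}$ and meets the line $\{x=0\}$ only at its endpoint $(0,0)$ (the upper semicircles centered at $(1/2,0)$ have radius at most $1/2$, and the lower semicircles have strictly positive abscissae). Then the circle $\gamma=\{z\in\bbC : |z+\delta|=\delta\}$, tangent to that line at the origin from the left, satisfies $\gamma\cap X=\{(0,0)\}$: a nonempty finite intersection. This example also pinpoints where the paper's proof breaks: an indecomposable continuum has no cut points (if $X\setminus\{p\}=A\sqcup B$ were a separation, then $A\cup\{p\}$ and $B\cup\{p\}$ would be proper subcontinua covering $X$), so here $X\setminus\gamma=X\setminus\{(0,0)\}$ is connected and dense, and the closure of its unique component is all of $X$ rather than a proper subcontinuum, contrary to what the paper asserts. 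So your decision to isolate the non-separation statement as the working form is exactly right: what is true, and what your argument proves, is that a simple closed curve meeting an indecomposable continuum in a countable set cannot separate it (equivalently, such a continuum minus a countable set is connected). That statement is also all the paper needs in Example~\ref{indec1} and its particular cases: for a good cut $J$, the set $M\cap J\subseteq \partial X\cap J$ is finite, so $M\setminus J$ is connected and lies in a single component of $\bbC\setminus J$, whence no good cut separates two points of $M$ and $M\subseteq E_x$ for every $x\in M$; one only has to drop the intermediate (false) conclusion that $M\cap J=\varnothing$.
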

\begin{proof}
It is known that $X$ has uncountably many composants if it is indecomposable~\cite[Section 48, Chapter V, Theorem 7]{Kur68}. Now, suppose that $X\cap \gamma=\{x_1,\ldots,x_k\}$ is finite. For $i=1,\ldots,k$,  we call $C_i$ the composant of $X$ with $x_i\in C_i$. Then for all $i$, $C_i$ contains all the connected components $P$ of $X\setminus \gamma$ whose closure $\overline{P}$ contains $x_i$. Indeed, such a $\overline{P}$ is a proper subcontinuum of $X$ containing $x_i$. Now, it follows that $X=\cup_{i=1}^k C_i$, hence $X$ has at most finitely many composants, which is impossible since $X$ is indecomposable.
\end{proof}

\begin{exam}\label{indec1} Let $X\subset\bbC$ be a continuum and $M\subset \partial X$ an indecomposable continuum. If $J$ is a good cut of $X$ enclosing a point $x\in M$ then $M\cap J$ a finite set, and hence is empty. Thus $M$ is enclosed  by $J$ and $M\subset E_x$ for each $x\in M$; moreover, connectedness of $M$ indicates that $M\subset F_x$. This gives a typical example of non-trivial fiber. We even have $\ell(X)=\infty$.
\end{exam}

\textbf{Particular case (1) of Example~\ref{indec1}.} Let $X$ be the Brower-Janiszewski-Knaster continuum, also called  \emph{buckethandle}, depicted for example in~\cite[Section 43, Chapter V]{Kur68}.  Then $X^o=\varnothing$ and $X$ is neither path-connected nor locally connected. With our definition, we have $E_x=F_x=X$ for all $x\in X$ and $\ell(X)=\infty$.

\textbf{Particular case (2) of Example~\ref{indec1}.}  Let $X$ be a \emph{Wada lake} together with its boundary. Then $X^o\ne\varnothing$ is an open disk, and its boundary $\partial X$ is indecomposable. We also have here for all $x\in X$ that $F_x=X=E_x$ and $\ell(X)=\infty$.
\vspace{0.2cm}

%%%%%%%%%%%%%%%%%%%%%%%%%%%%%%%%%%%%%%%%%%%%%%%%%
%%%%%%%%%%%%%%%%%%%%%%%%%%%%%%%%%%%%%%%%%%%%%%%%%
%%%%%%%%%%%%%%%%%%%%%%%%%%%%%%%%%%%%%%%%%%%%%%%%%
\section{Basic properties of fibers}\label{sec:basicprop}

We collect here some basic properties concerning fibers. \\

The following theorems of Torhorst and Cartheodory~\cite{Car13,Car13b,Mil99} will be used frequently in our paper. 
\begin{theo}\label{TorTheo} \emph{(Theorem of Torhorst, see~\cite[Part B, Section VI, Torhorst Theorem and Lemma 2]{Why79})}. The boundary $\partial C$ of each component $C$ of the complement of a locally connected continuum $X$ is itself a locally connected continuum. Moreover, if $X$ has no cut point, then $\partial C$ is a simple closed curve.
\end{theo}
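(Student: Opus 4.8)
The plan is to work on the sphere $\hat{\bbC}$ and to treat the two assertions separately, the local connectedness of $\partial C$ being the substantial part and the simple-closed-curve statement following from it. First I record the ambient structure: if $X$ is a continuum and $C$ is a component of $\hat{\bbC}\setminus X$, then $\hat{\bbC}\setminus C=X\cup\bigcup_{C'\neq C}C'$ is connected (each other complementary domain $C'$ has a limit point in the connected set $X$), so by Alexander duality $C$ is simply connected; in particular $\overline{C}$ is a continuum and $\partial C$ is nonempty. A preliminary step is to check that $\partial C$ is itself connected, which I would deduce from the simple connectedness of $C$ together with the local connectedness of $X$.

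For the first assertion, I would prove that $\partial C$ is locally connected by verifying Sierpi\'nski's Property S: for every $\epsilon>0$, $\partial C$ is a finite union of connected sets of diameter $<\epsilon$. Since $X$ is locally connected it has Property S, so there is a finite cover $X=\bigcup_{i=1}^n U_i$ by connected sets, open in $X$, with $\diam U_i<\epsilon$; this induces the cover $\partial C=\bigcup_i(\partial C\cap U_i)$ by sets of diameter $<\epsilon$, and it remains to replace each $\partial C\cap U_i$ by finitely many connected pieces. The main obstacle is precisely that $\partial C\cap U_i$ may be disconnected, a priori into infinitely many components. To control this I would invoke two facts about the locally connected continuum $X$: that for every $\eta>0$ only finitely many complementary domains of $X$ have diameter $\geq\eta$, and that $C$ is simply connected, so that the components of $\partial C\cap U_i$ of diameter $\geq\eta$ are organized as finitely many boundary arcs ``seen'' from $C$, the remaining components of diameter $<\eta$ being absorbed into neighbouring small connected pieces. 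Making this grouping rigorous is the heart of the argument.

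For the second assertion I would argue by prime ends. Assuming the first part, let $\varphi:\bbD\to C$ be a Riemann map; since $\partial C$ is locally connected, Carath\'eodory's extension theorem gives a continuous surjection $\bar\varphi:\overline{\bbD}\to\overline{C}$ with $\bar\varphi(\partial\bbD)=\partial C$. It then suffices to show that $\bar\varphi$ is injective on $\partial\bbD$, for then it is a homeomorphism of $\partial\bbD$ onto $\partial C$ and the latter is a simple closed curve. Suppose $\bar\varphi(\zeta_1)=\bar\varphi(\zeta_2)=p$ with $\zeta_1\neq\zeta_2$. Then $\Gamma=\bar\varphi([0,\zeta_1])\cup\bar\varphi([0,\zeta_2])$ is a simple closed curve meeting $X$ only at $p$, since $\Gamma\setminus\{p\}\subseteq C$. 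Writing $D_1,D_2$ for the two components of $\hat{\bbC}\setminus\Gamma$, the images $\bar\varphi(J_1),\bar\varphi(J_2)$ of the two arcs of $\partial\bbD$ cut off by $\zeta_1,\zeta_2$ satisfy $\bar\varphi(J_1)\cup\bar\varphi(J_2)=\partial C$ and lie, apart from $p$, in $D_1$ and $D_2$ respectively (a point $q\neq p$ of $\partial C$ cannot lie on $\Gamma$, as $\Gamma\cap X=\{p\}$). Since $\partial C$ is nondegenerate, both arcs contribute points other than $p$, so $X\supseteq\partial C$ meets both $D_1$ and $D_2$ while $(X\setminus\{p\})\cap\Gamma=\emptyset$; hence $X\setminus\{p\}=(X\cap D_1)\cup(X\cap D_2)$ is a separation and $p$ is a cut point of $X$. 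As $X$ has no cut point, no such pair $\zeta_1,\zeta_2$ exists, $\bar\varphi|_{\partial\bbD}$ is injective, and $\partial C$ is a simple closed curve.

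The principal difficulty is thus the first part: passing from Property S of $X$ to Property S of $\partial C$, i.e. bounding and grouping the components of $\partial C$ inside the small connected sets $U_i$. Once $\partial C$ is known to be a locally connected continuum, the second part is comparatively routine, the only external inputs being Carath\'eodory's theorem and the elementary separation argument that turns a non-injective boundary value of $\bar\varphi$ into a cut point of $X$.
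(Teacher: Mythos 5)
The paper itself does not prove Theorem~\ref{TorTheo}: it is quoted from Whyburn \cite{Why79} and used as a black box, so your attempt can only be compared with the classical proofs, not with anything in the text. Measured that way, your proposal has a genuine gap, and it sits exactly where you yourself locate ``the heart of the argument''. The first assertion --- that $\partial C$ is a locally connected continuum --- is the whole substance of Torhorst's theorem, and you do not prove it: you reduce it to Property S for $\partial C$, observe correctly that a set $\partial C\cap U_i$ may have infinitely many components, and then leave the required grouping of those components as an unexecuted plan. The two facts you propose to invoke do not plausibly close this hole. The finiteness of the complementary domains of $X$ of diameter $\geq\eta$ does not bear on it, because all the components of $\partial C\cap U_i$ lie on the boundary of the \emph{single} domain $C$, not on boundaries of distinct domains; and the phrase ``organized as finitely many boundary arcs seen from $C$'' has no precise meaning before one knows $\partial C$ is locally connected --- a prime end of $C$ can have a nondegenerate impression, which is precisely what must be excluded. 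A workable classical route, available inside this very paper, is different: show that $\hat{\bbC}\setminus C$ (which is $X$ together with the other complementary domains) is locally connected --- this is where local connectedness of $X$ and the finiteness of large complementary domains actually enter --- then apply Theorem~\ref{CaraTheo} (after a M\"obius change of coordinates making $C$ the exterior domain) to extend the Riemann map of $C$ continuously to the closed disk; then $\partial C$ is a continuous image of $\bbS^1$, hence a locally connected continuum by the Hahn--Mazurkiewicz theorem.

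Your second part is, in outline, the standard and essentially correct argument (Jordan curve built from two radii, separation of the sphere, production of a cut point), but it also contains a smaller gap: the step ``since $\partial C$ is nondegenerate, both arcs contribute points other than $p$''. Nondegeneracy of $\partial C$ only guarantees that at least one of $\bar\varphi(J_1),\bar\varphi(J_2)$ is larger than $\{p\}$; a priori $\bar\varphi$ could collapse the whole arc $J_1$ to the point $p$, in which case $X$ meets only one side of $\Gamma$ and no cut point is produced. One must show that the boundary extension of a conformal map cannot be constant on a nondegenerate arc of $\bbS^1$, e.g.\ by Schwarz reflection (if $\varphi-p$ has vanishing boundary values on an arc, reflecting across that arc forces $\varphi\equiv p$) or by the F.~and~M.~Riesz theorem. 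With that supplied, and granting the first part, your second part goes through.
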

\begin{theo}\label{CaraTheo}\emph{(Theorem of Caratheodory, see~\cite[Theorem 9.8]{Pom75})}.
Let $\varphi:\Delta:=\{z\in\bbC:|z|>1\}\cup\{\infty\}\to \hat\bbC$ with $\varphi(\infty)=\infty$ be univalent, \emph{i.e.}, holomorphic and one-to-one. Set $G=\varphi(\Delta)$. Then the following assertions are equivalent.
\begin{itemize}
\item[(i)] $\varphi$ has a continuous extension to $\overline{\Delta}=\{z\in\mathbb{C}:|z|\geq 1\}\cup\{\infty\}$.
\item[(ii)] $\partial G$ is locally connected.
\item[(iii)] $\hat\bbC\setminus G$ is locally connected.
\end{itemize}
\end{theo}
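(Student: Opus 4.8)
The plan is to establish the core analytic equivalence $(i)\Leftrightarrow(ii)$ by proving the two implications separately, and then to deduce $(ii)\Leftrightarrow(iii)$ from purely topological facts about complements of simply connected domains. Since $\Delta$ is conformally equivalent to the unit disk $\bbD=\{z:|z|<1\}$ via $z\mapsto 1/z$, a map extending to a homeomorphism of the closed disks and exchanging $\bbS^1$ with itself, I would first reduce to the classical situation of a conformal map of $\bbD$; nothing below depends on whether the domain is the inside or the outside of the circle. Throughout, $G=\varphi(\Delta)$ is a simply connected domain with $\infty\in G$, and $K:=\hat{\bbC}\setminus G$ is a continuum whose \emph{only} complementary component is the connected set $G$, so that $\partial G=\partial K$.

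For $(i)\Rightarrow(ii)$: if $\varphi$ extends to a continuous map $\bar\varphi:\overline{\Delta}\to\hat{\bbC}$, then by compactness $\bar\varphi(\overline\Delta)=\overline G$, and a short verification that boundary maps to boundary gives $\bar\varphi(\bbS^1)=\partial G$. The circle is a continuous image of $[0,1]$, hence so is $\partial G$; by the Hahn--Mazurkiewicz theorem $\partial G$ is a locally connected continuum. This direction is routine.

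The substantial step is $(ii)\Rightarrow(i)$, which I expect to be the main obstacle. The idea is to control the images of small crosscuts by the length--area (Wolff) method. Fix $\zeta_0\in\bbS^1$ and, for small $\rho>0$, let $C_\rho=\{z\in\Delta:|z-\zeta_0|=\rho\}$ be a circular crosscut; write $\ell(\rho)$ for the Euclidean length of $\varphi(C_\rho)$. The Cauchy--Schwarz inequality gives
\[\ell(\rho)^2\le \pi\rho\int_{C_\rho}|\varphi'(z)|^2\,|dz|,\]
and integrating $\ell(\rho)^2/\rho$ over $\rho\in(0,\delta)$ bounds the result by $\pi$ times the area of $\varphi(\{|z-\zeta_0|<\delta\}\cap\Delta)$, which is finite as this region avoids a neighbourhood of $\infty$. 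Since $\int_0^\delta d\rho/\rho=\infty$, there is a sequence $\rho_n\downarrow 0$ with $\ell(\rho_n)\to 0$, so the crosscut images $\varphi(C_{\rho_n})$ have diameters tending to $0$ and their two ends converge to points of $\partial G$. Local connectedness of $\partial G$ now enters decisively: it lets me join those two ends by an arc in $\partial G$ of small diameter, so each $\varphi(C_{\rho_n})$ together with this boundary arc encloses a region whose diameter tends to $0$; these nested inner regions shrink to a single point of $\partial G$, which defines $\bar\varphi(\zeta_0)$ and yields the (unrestricted) limit of $\varphi$ at $\zeta_0$. The delicate points I would handle carefully are the existence of the limiting ends, the uniform shrinking of the cut-off regions as $\zeta_0$ varies, and the resulting joint continuity of the extension.

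Finally, for $(ii)\Leftrightarrow(iii)$ I would invoke the classical characterization of locally connected continua in the sphere: a continuum $K\subseteq\hat{\bbC}$ is locally connected if and only if the boundary of every complementary component is locally connected and, for each $\varepsilon>0$, only finitely many complementary components have diameter exceeding $\varepsilon$. Since $K=\hat{\bbC}\setminus G$ has the single complementary component $G$, both conditions collapse to the one requirement that $\partial G$ be locally connected, giving $(ii)\Leftrightarrow(iii)$ at once. The forward implication $(iii)\Rightarrow(ii)$ is in fact already contained in the Torhorst Theorem~\ref{TorTheo}, so only the converse uses the finiteness clause, which is vacuous here.
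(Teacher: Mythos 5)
The paper does not prove this statement at all: it is Carathéodory's classical continuity theorem, quoted with the citation to Pommerenke and used as a black box in Proposition~\ref{interior-J}, Proposition~\ref{prop:lc->trivial_base} and Section~\ref{sec:DetailedProof}. So the only meaningful comparison is with the standard literature proof, and your sketch is essentially that proof: the M\"obius reduction $z\mapsto 1/z$ to the disk, Hahn--Mazurkiewicz for $(i)\Rightarrow(ii)$, and the Wolff length--area estimate plus local connectedness of $\partial G$ for $(ii)\Rightarrow(i)$ are exactly the ingredients of the textbook argument. Your treatment of $(ii)\Leftrightarrow(iii)$ via Whyburn's characterization (boundaries of complementary domains locally connected, plus the null-family condition on their diameters) is also correct: since $\hat\bbC\setminus G$ is a continuum whose unique complementary component is $G$, both conditions collapse to local connectedness of $\partial G$, and the direction $(iii)\Rightarrow(ii)$ is indeed Theorem~\ref{TorTheo} applied to the continuum $\hat\bbC\setminus G$.

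Two remarks on where your outline is thinnest. First, the step ``each $\varphi(C_{\rho_n})$ together with this boundary arc encloses a region whose diameter tends to $0$'' is the actual heart of the theorem, not a routine verification: to trap the cut-off piece of $G$ inside a small set one needs a genuine separation argument (Janiszewski's theorem, or the standard lemma that small crosscuts of $G$ cut off pieces of small diameter), and this is precisely where local connectedness of the compact set $\partial G$ must be upgraded to \emph{uniform} local connectedness; you flag this as delicate, which is fair for a proposal, but be aware that essentially all of the difficulty lives there. Second, the ``joint continuity'' you defer at the end is automatic: once the unrestricted limit of $\varphi$ exists at every point of $\bbS^1$, the extended map is continuous on $\overline{\Delta}$ by a standard elementary argument, so no additional uniformity in $\zeta_0$ is required.
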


In Example 2.3, a point $x\in X^o$ can be separated from  $y=(0,1)\in X$ by a simple closed curve $\gamma\subset X^o$, while $x$ and $y$ cannot be separated by a good cut of $X$.  This cannot happen if $X$ is locally connected, as we show in Proposition~\ref{interior-J}.
\begin{prop}\label{interior-J}
Let $X\subset\bbC$ be a locally connected continuum such that $\bbC\setminus X$ has finitely many components $U_1,\ldots, U_k$. Suppose that two points $x,y\in X$ are separated by a simple closed curve $\gamma\subset X^o$.  Then they are even separated by a good cut of $X$. 
\end{prop}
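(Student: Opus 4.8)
\emph{Setup and plan.} Write $D=\mathrm{In}(\gamma)$ and $D'=\mathrm{Out}(\gamma)$ for the bounded and unbounded components of $\bbC\setminus\gamma$, and assume without loss of generality that $x\in D$ and $y\in D'$. Since $\gamma\subset X^o$ is disjoint from $\partial X\supseteq\bigcup_i\partial U_i$, each complementary component $U_i$, being connected, lies entirely in $D$ or entirely in $D'$, and the same holds for each $\partial U_i$. The plan is to turn $\gamma$ into a good cut by grafting onto it a thin ``finger'' that runs from $\gamma$, crosses $\partial X$, and dips into one complementary component, all while keeping $x$ and $y$ on the sides they already occupy.

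\emph{Locating a component and building the finger.} Let $W$ be the component of $X^o\cap D'$ having an arc of $\gamma$ on its frontier (such a $W$ exists, since a one-sided neighbourhood of $\gamma$ lies in $X^o\cap D'$). Its frontier is contained in $\gamma\cup\partial X$ and cannot be contained in $\gamma$ alone: otherwise $W$ would be relatively clopen in the connected set $D'$, hence equal to $D'$, which is impossible as $D'$ is unbounded while $W\subset X$. Any point of $\partial W\cap\partial X$ lies in $D'$ (it lies in $\overline{W}\subset\overline{D'}$ but not on $\gamma$), so it belongs to $\partial U_i$ for some $U_i\subset D'$. By the Torhorst Theorem~\ref{TorTheo}, $\partial U_i$ is a locally connected continuum, whence by Carath\'eodory's Theorem~\ref{CaraTheo} the Riemann map onto $U_i$ extends continuously and every boundary point is accessible from $U_i$; moreover the points of $\partial W$ accessible from $W$ are dense in $\partial W$. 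I choose two nearby points $b_1,b_2\in\partial U_i$ accessible both from $U_i$ and from $W$, join them by a crosscut $\tau$ of $U_i$ (a simple arc with interior in $U_i$), and join each $b_j$ to a common base point $a\in\gamma$ by two disjoint simple arcs inside $W\subset X^o$. Together these pieces form a thin simple arc leaving $\gamma$ at $a$, staying in $X^o$ until it reaches $\partial X$ exactly at $b_1,b_2$, and then entering $U_i$ along $\tau$.

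\emph{Assembly and separation.} Replacing the short subarc of $\gamma$ between the two feet of the finger by the finger yields a simple closed curve $\gamma'$. By construction $\gamma'\cap\partial X=\{b_1,b_2\}$ is finite and nonempty (the arcs in $W$ avoid $\partial X$, and $\mathrm{int}\,\tau\subset U_i$), and $\gamma'\setminus X\supseteq\mathrm{int}\,\tau\neq\varnothing$, so $\gamma'$ is a good cut of $X$. Since the finger is a thin protrusion of $D$ into $D'\cup U_i$, which can be routed to avoid the two single points $x$ and $y$, the bounded complementary region of $\gamma'$ is $D$ enlarged only by this thin sliver; hence $x$ still lies inside $\gamma'$ and $y$ still lies outside, and $\gamma'$ separates $x$ and $y$.

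\emph{The main obstacle.} The delicate point is the \emph{finiteness} of $\gamma'\cap\partial X$: an arc joining $\gamma$ to a complementary component could a priori meet the possibly complicated set $\partial X$ infinitely often. This is exactly where local connectedness is indispensable: through Theorems~\ref{TorTheo} and~\ref{CaraTheo} it tames $\partial X$ enough that the finger crosses it only at the two accessible feet $b_1,b_2$. The careful bookkeeping needed to choose $b_1,b_2$ accessible from \emph{both} sides is the technical heart of the argument, and it is precisely this step that breaks down for the non-locally connected example with $E_x\neq F_x$ considered earlier.
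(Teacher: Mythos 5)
Your overall outline matches the paper's: find two points on the boundary of a \emph{single} complementary component $U_i$ that are accessible both from $U_i$ (via Theorems~\ref{TorTheo} and~\ref{CaraTheo}) and from the $X^o$-side near $\gamma$, join them by a crosscut of $U_i$ and by arcs in $X^o$ reaching $\gamma$, and close up with part of $\gamma$. But the step you yourself call the technical heart is exactly where there is a genuine gap. You justify the choice of $b_1,b_2$ by two facts: every point of $\partial U_i$ is accessible from $U_i$, and the accessible points of $\partial W$ are dense in $\partial W$. These do not combine to give what you need. A set dense in $\partial W$ can miss $\partial W\cap\partial U_i$ entirely: that intersection may be nowhere dense in $\partial W$ (your clopen argument produces only \emph{one} point of $\partial W\cap\partial X$, and that point need not be accessible from $W$). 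Moreover, even after producing accessible-from-$W$ points of $\partial W\cap\partial X$ (which one can do, by taking first-exit points of short segments kept away from $\gamma$), these may a priori be scattered over the boundaries of \emph{several different} components $U_j\subset D'$, whereas your crosscut $\tau$ requires two of them on the same $\partial U_j$. The missing idea is a pigeonhole over the finitely many complementary components. This is precisely what the paper's construction supplies: after normalizing $\gamma$ to the unit circle by Sch\"onflies, it shoots $k+1$ pairwise disjoint radial rays, takes the first hit of each on $\partial W$ (such a point is automatically accessible from $W$, with the radial segment as access arc based on $\gamma$), and concludes that two of the $k+1$ hit points lie on a common $\partial U_i$. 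Note also that the two access arcs must be \emph{disjoint}; in the paper this comes for free from the radial geometry, while your text never addresses it.

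There are two further problems in the assembly. First, your finger is attached to $\gamma$ at a single base point $a$, so $\sigma_1\cup\tau\cup\sigma_2$ is a loop and $\gamma\cup(\sigma_1\cup\tau\cup\sigma_2)$ is a wedge of two circles; there is then no ``subarc of $\gamma$ between the two feet'' to replace, and no simple closed curve $\gamma'$ results. You need two distinct feet $a_1\ne a_2$ on $\gamma$ reached by disjoint arcs, as in the paper. Second, the final separation argument is not sound as stated: routing the \emph{arcs} to avoid the point $y$ does not make the enclosed \emph{region} avoid $y$, and the sliver cut off by a given finger may well contain $y$. The clean repair is the paper's: the theta-curve $\gamma\cup(\sigma_1\cup\tau\cup\sigma_2)$ has three complementary faces, one of which is $D\ni x$; the point $y$ lies in one of the other two faces, and then the corresponding one of the two candidate curves $\beta_i\cup\sigma_1\cup\tau\cup\sigma_2$ (where $\beta_1,\beta_2$ are the two arcs of $\gamma$ between the feet) is a good cut separating $x$ from $y$. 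One must keep both candidates and choose afterwards; no ``thinness'' of the finger can guarantee in advance that a single prescribed curve works.
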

\begin{proof}
Denote by $W$ the component of $X^o$ containing $\gamma$ and assume that $x\in \textrm{Int}(\gamma)$ and $y\in \textrm{Ext}(\gamma)$. We may further assume that $\gamma=\{z: |z|=1\}$, by the theorem of Sch\"onflies (see~\cite{Thom92}).

Choose $k+1$ external rays $R_j=\{re^{2\pi{\bf i}\theta_j}: r\ge 1\}$ for $\theta_j=\frac{j}{k+1}$ with $1\le j\le k+1$. For $j=1,\cdots,k+1$, let us denote by $r_j>1$ the radius such that $z_j=r_je^{2\pi{\bf i}\theta_j}$ is the first point of $R_j$ that belongs to $\partial W$, meaning that the half-open arc $\alpha_j=\left\{re^{2\pi{\bf i}\theta_j}: 1\le r<r_j\right\}$ is entirely contained in $W$. Since $\partial W$ is contained in $\bigcup_{i=1}^k\partial U_i$ we know that two of the $k+1$ points $z_j$, say $z_{j_1}$ and $z_{j_2}$, belong to $\partial U_i$ for some fixed $i\in\{1,\ldots,k\}$.

By connectedness of $X$, we can see that $U_i$ is a simply connected domain; on the other hand, by the theorem of Torhorst (see Theorem~\ref{TorTheo}),  $\partial U_i$ is a locally connected continuum. Therefore, $z_{j_1},z_{j_2}\in\partial U_i$ can be connected by an open arc $\alpha\subset U_i$ (see Theorem~\ref{CaraTheo} of Caratheodory).

Now, let $\beta_1,\beta_2$ be the components of $\gamma\setminus\left\{e^{2\pi{\bf i}\theta_{j_1}},e^{2\pi{\bf i}\theta_{j_2}}\right\}$. Then, for $i=1,2$, $\gamma_i=\alpha_{j_1}\cup \alpha\cup \alpha_{j_2}\cup\beta_i$ are both good cuts of $X$ with $\gamma_i\cap \partial X=\{z_{j_1},z_{j_2}\}$ and one of them separates $x$ from $y$.
\end{proof}

Proposition~\ref{interior-J} has a direct corollary.
\begin{coro}\label{interior-E}
If $X\subset\bbC$ is a locally connected continuum with finitely many complementary components, then the pseudo fiber $E_x$ of $X$ at a point $x\in X^o$ is trivial, {\em i.e.}, it consists of a single point.
\end{coro}

In the following, we continue to discuss basic properties of fibers and pseudo-fibers, as given in Proposition~\ref{prop:fibers}. Before that, we prove a helpful lemma.

\begin{lemm}
\label{comp-ineq}
Let $E\subset\bbC$ be a nonempty compact set such that $\piz(\bbC\setminus E)$ is finite, and $F$ a component of $E$. Then every component of $\bbC\setminus F$ contains at least one component of $\bbC\setminus E$. In particular,  $\#\piz(\bbC \setminus F) \leq \#\piz(\bbC \setminus E)$.
\end{lemm}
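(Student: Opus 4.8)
The plan is to set up a surjection from $\piz(\bbC\setminus E)$ onto $\piz(\bbC\setminus F)$. Since $F\subset E$ we have $\bbC\setminus E\subset\bbC\setminus F$, so each component $U$ of $\bbC\setminus E$ is a connected subset of the open set $\bbC\setminus F$ and therefore lies inside a unique component of $\bbC\setminus F$. This assignment defines a map $\sigma\colon\piz(\bbC\setminus E)\to\piz(\bbC\setminus F)$. Its surjectivity is exactly the first assertion of the lemma, namely that every component of $\bbC\setminus F$ contains at least one component of $\bbC\setminus E$; and once surjectivity is known, the cardinality inequality $\#\piz(\bbC\setminus F)\leq\#\piz(\bbC\setminus E)$ follows immediately, since the domain of $\sigma$ is finite by hypothesis. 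So the whole lemma reduces to proving that $\sigma$ is onto.

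The heart of the argument is to show that every component $V$ of $\bbC\setminus F$ meets $\bbC\setminus E$. I would argue by contradiction: suppose $V\subset E$. First I would record two elementary topological facts about $V$. Because $F$ is compact (a component of the compact set $E$), the set $\bbC\setminus F$ is open, its components are open and relatively closed, and hence the frontier satisfies $\partial V\subset F$. Moreover $\partial V\neq\varnothing$, since otherwise the nonempty set $V$ would be a proper clopen subset of the connected plane $\bbC$. Combining these, $\overline{V}\cap F\supseteq\partial V\neq\varnothing$, and since $V$ and $F$ are each connected it follows that $V\cup F$ is connected.

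The contradiction is then immediate: $V\cup F$ is a connected subset of $E$ (using $V\subset E$ and $F\subset E$) which strictly contains $F$, because $V$ is nonempty and disjoint from $F$. This contradicts the maximality of $F$ as a component of $E$. Therefore $V$ must contain some point $p\notin E$, and the component of $\bbC\setminus E$ through $p$, being connected and contained in $\bbC\setminus F$, lies entirely in $V$; this establishes the surjectivity of $\sigma$ and completes the proof. I expect the only delicate point to be the connectedness verification in the previous paragraph --- that is, checking that $V\cup F$ is genuinely connected via the frontier facts $\partial V\subset F$ and $\partial V\neq\varnothing$ --- after which the maximality argument requires no computation.
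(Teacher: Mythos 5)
Your proof is correct, and it takes a genuinely different route from the paper's. The paper argues constructively: it decomposes $\bbC\setminus F$ as a finite union of explicitly built connected sets $Q_i$, where $Q_i$ is a complementary component $P_i$ of $E$ together with all components of $E$ other than $F$ whose closures meet $\overline{P_i}$; the key auxiliary claim there is that every component $M$ of $E$ meets some $\overline{P_i}$, which is proved by a metric argument (if not, a small $\epsilon$-neighborhood of $M$ would be a connected subset of $E$ strictly containing $M$, contradicting maximality of $M$). You instead argue by contradiction directly on a component $V$ of $\bbC\setminus F$: if $V\subset E$, then the frontier facts $\partial V\subset F$ and $\partial V\neq\varnothing$ (the latter from connectedness of $\bbC$) make $V\cup F$ a connected subset of $E$ strictly containing $F$, contradicting the maximality of $F$. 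Both proofs hinge on the maximality of a component, but yours applies it to $F$ itself rather than to the other components of $E$, avoids the metric entirely (it works verbatim in any connected, locally connected space --- local connectedness of $\bbC$ is what makes the components of $\bbC\setminus F$ open, which you do use), and in fact never needs the finiteness hypothesis for the main assertion. What the paper's longer argument buys is an explicit picture of how $\bbC\setminus F$ is tiled by the sets $Q_i$, i.e.\ which components of $E$ get absorbed into which complementary component of $F$; your argument yields the surjectivity statement with less machinery but without that extra structural information.
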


\begin{proof}
As $F$ is a component of $E$, it is closed in $E$. This means that there is a closed subset $B$ of $\bbC$ with $F=B\cap E$. Thus $F$ is also a nonempty compact set of $\bbC$.

Recall that every complementary component $P$ of a compact set $X$ in $\bbC$ is a path connected open set whose closure $\overline{P}$ in $\bbC$ intersects $X$. (Otherwise, $\emptyset\ne P=\overline{P}\subsetneqq\bbC$ is a clopen subset of $\bbC$, contradicting the connectedness of $\bbC$.)

Denote by $P_1,\ldots,P_n$  the components of $\bbC\setminus E$. For $1\le i\le n$, let $Q_i$ be the union of $P_i$ with all the components $M$ of $E$, other than $F$, such that $\overline{P_i}\cap M\ne\emptyset$. Then, every $Q_i$ is a connected subset of $\bbC\setminus F$. Showing that
$\displaystyle\bbC\setminus F=\bigcup_{i=1}^nQ_i$ will end our proof.

\begin{clai-nn}
Every component $M$ of $E$ intersects $\overline{P_i}$ for some $i$.
\end{clai-nn}
Suppose on the contrary that there were a component $M$ of $E$ with $M\cap\left(\bigcup_{i=1}^n\overline{P_i}\right)=\emptyset$. $M$ is a closed subset of $\mathbb{C}$, as it is closed in $E$. Since $\overline{\bbC\setminus E}=\overline{\bigcup_{i=1}^nP_i}=\bigcup_{i=1}^n\overline{P_i}$, the distance
$$\displaystyle d:=\inf\left\{|x-y|: x\in M, y\in\overline{\bbC\setminus E}\right\}
$$
between $M$ and $\overline{\bbC\setminus E}$ is positive. Fix a positive number $\epsilon$ smaller than $d$. Then the $\epsilon$-neighborhood $M_\epsilon:=\bigcup_{x\in M}\{y: |x-y|<\epsilon\}$ of $M$ is disjoint from $\overline{\bbC\setminus E}$ and hence is a subset of $E$. As $M\subsetneqq M_\epsilon\subset E$ and $M_\epsilon$ is connected, this  contradicts the fact that $M$ is a component of $E$. This proves the claim.

Using this claim, we obtain that $\displaystyle\bbC\setminus F=\bigcup_{i=1}^nQ_i$. Indeed, suppose $\displaystyle \bigcup_{i=1}^nQ_i  \subsetneq \bbC\setminus F$, and let $x\in (\bbC\setminus F)\setminus\bigcup_{i=1}^nQ_i$. Then $x\in E\setminus F$. Denoting by $M_x$ the connected component of $E$ containing $x$, we have $M_x\ne F$ and by claim $M_x\cap \overline{P_{i_0}}\ne \emptyset$ for some $i_0$. It follows that $M_x\subset Q_{i_0}$, hence $x\in Q_{i_0}$, contradicting the definition of $x$.
\end{proof}

The following proposition makes Definitions~\ref{def:order} and~\ref{def:numscale} consistent.
\begin{prop}
\label{prop:fibers}
Let $X \subseteq \bbC$ be a continuum such that $\piz(\bbC \setminus X)$ is finite.
For all $x \in X$, we have:
\begin{enumerate}
\itemsep=0pt
\item $E_x$ and $F_x$ are compact.
\item $\#\piz(\bbC \setminus F_x) \ \leq \ \#\piz(\bbC \setminus E_x) \ \leq \ \#\piz(\bbC \setminus X)$.
\end{enumerate}
\end{prop}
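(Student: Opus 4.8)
The plan is to prove the two items in order, since compactness is needed to apply Lemma~\ref{comp-ineq}, and the component inequality then follows almost immediately from that lemma applied twice.

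\textbf{Item (1): compactness of $E_x$ and $F_x$.}
First I would show that $E_x$ is closed in $X$. Since $X$ is compact, this gives compactness of $E_x$; and once $E_x$ is compact, its component $F_x$ is automatically closed in $E_x$, hence compact as well. To show $E_x$ is closed, I would argue that its complement $X \setminus E_x$ is open in $X$. Recall that $y \in X \setminus E_x$ means there is a good cut $\gamma$ separating $x$ from $y$, so $x$ and $y$ lie in distinct components of $\bbC \setminus \gamma$. The key point is that these components are \emph{open} sets: since $\gamma$ is a simple closed curve, $\bbC \setminus \gamma$ has exactly two components, both open, by the Jordan curve theorem. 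Thus the component of $\bbC \setminus \gamma$ containing $y$ is an open neighborhood $V$ of $y$, and every point $y' \in V \cap X$ is separated from $x$ by the \emph{same} good cut $\gamma$, so $V \cap X \subseteq X \setminus E_x$. This exhibits $X \setminus E_x$ as open in $X$, proving $E_x$ closed and hence compact. The main subtlety to check here is merely that a single good cut $\gamma$ witnesses separation for a whole neighborhood of $y$ at once; this is exactly what the openness of the Jordan components delivers.

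\textbf{Item (2): the chain of inequalities.}
The right-hand inequality $\#\piz(\bbC \setminus E_x) \leq \#\piz(\bbC \setminus X)$ does not follow directly from Lemma~\ref{comp-ineq}, because $E_x$ need not be a single component of any nice set; so I would not apply the lemma to $E_x$ directly against $X$. Instead, I would first establish the \emph{left} inequality $\#\piz(\bbC \setminus F_x) \leq \#\piz(\bbC \setminus E_x)$: since $F_x$ is, by definition, a component of $E_x$, and $E_x$ is compact with $\piz(\bbC \setminus E_x)$ finite (the latter being part of what must be checked), Lemma~\ref{comp-ineq} applies verbatim with $E = E_x$ and $F = F_x$, yielding this inequality at once. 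For the right inequality, the cleanest route is to produce, for the compact set $E_x$, a surjection from $\piz(\bbC \setminus X)$ onto $\piz(\bbC \setminus E_x)$, or equivalently to show every complementary component of $E_x$ contains a complementary component of $X$. Since $X \supseteq E_x$ implies $\bbC \setminus X \subseteq \bbC \setminus E_x$, each component of $\bbC \setminus X$ lies in a unique component of $\bbC \setminus E_x$; the content is that this assignment is onto, i.e.\ that $E_x$ has no complementary component that is entirely filled by points of $X \setminus E_x$. This is where I expect the main work to lie, and I would handle it by the same $\epsilon$-neighborhood argument as in Lemma~\ref{comp-ineq}: a complementary component of $E_x$ containing no complementary component of $X$ would consist, together with its boundary, of points of $X$, and one separates off a piece contradicting the structure of $E_x$ as a pseudo-fiber.

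\textbf{Main obstacle.}
The principal difficulty is the right-hand inequality and, implicitly, the finiteness of $\#\piz(\bbC \setminus E_x)$ needed before Lemma~\ref{comp-ineq} can even be invoked for item~(2); the compactness in item~(1) is routine once the openness of Jordan components is used. A tempting but flawed shortcut would be to treat $E_x$ as a component of some auxiliary set and quote Lemma~\ref{comp-ineq} as a black box; since $E_x$ is genuinely a pseudo-fiber rather than a connected component, one must instead argue the containment of complementary components by hand, leaning on compactness from item~(1) and on the fact that good cuts separate by open Jordan regions. Once that containment is in place, the finiteness of $\#\piz(\bbC \setminus E_x)$ follows (it is bounded by $\#\piz(\bbC \setminus X) < \infty$), which retroactively legitimizes the application of Lemma~\ref{comp-ineq} to obtain the left inequality, completing the chain.
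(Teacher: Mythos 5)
Your item (1) is correct and is essentially the paper's own argument: the paper phrases it with a convergent sequence in $E_x$, you phrase it via openness of the Jordan domains of $\bbC\setminus\gamma$, and the two are interchangeable. Your architecture for item (2) also matches the paper's: show that every component of $\bbC\setminus E_x$ contains a component of $\bbC\setminus X$ (giving the right inequality and, in particular, finiteness of $\#\piz(\bbC\setminus E_x)$), then obtain the left inequality from Lemma~\ref{comp-ineq} applied to $E=E_x$, $F=F_x$. The problem is the one step you yourself flag as ``the main work'': you propose to rule out a component $Q$ of $\bbC\setminus E_x$ with $Q\subseteq X$ by ``the same $\epsilon$-neighborhood argument as in Lemma~\ref{comp-ineq}'', and that argument cannot do the job. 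The $\epsilon$-neighborhood trick exploits maximality of a \emph{connected component} $M$ of a compact set ($M$ strictly contained in the connected set $M_\epsilon\subseteq E$ is absurd); here $Q$ is an open complementary component of the pseudo-fiber, no maximality of that kind is in play, and nothing in that argument contradicts $Q\subseteq X$.

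What actually kills $Q\subseteq X$ is the definitional requirement that a good cut satisfies $\gamma\setminus X\neq\varnothing$ --- a property your plan never invokes. The paper's argument runs: for $y\in Q$, pick a good cut $\gamma$ separating $x$ from $y$; the Jordan domain $U_\gamma\ni y$ is disjoint from $E_x$ (every point of $U_\gamma\cap X$ is separated from $x$ by $\gamma$), hence $U_\gamma\subseteq Q\subseteq X$; but $\gamma$ contains a point $z\notin X$, so $z\in P_i$ for some component $P_i$ of $\bbC\setminus X$, and since $P_i$ is open and $z\in\partial U_\gamma$, the set $P_i$ meets $U_\gamma\subseteq X$ --- a contradiction. (The paper states this in the equivalent form $y\in U_\gamma\cup P_i\subseteq Q_i$.) This ingredient is not optional: the paper's remark on the condition $\gamma\setminus X\neq\varnothing$, following the example where $E_x\neq F_x$, shows that if cuts lying inside $X$ were permitted one would get $E_{(0,1)}=S\cup T$, whose complement has two components while $\bbC\setminus X$ has only one --- i.e.\ the very inequality you are trying to prove would be false. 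So any completion of your plan must route through $\gamma\setminus X\neq\varnothing$; as written, the plan is missing the idea that closes its key step.
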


\begin{proof}
Let $x\in X$, $(y_n)_{n\in\mathbb{N}}$ a convergent sequence in $E_x$ and 
$y\in X$ its limit ($X$ is compact).  If $y$ can be separated from $x$ by a good cut $\gamma$, then for $k$ large enough $y_k$ is separated from $x$ by this good cut $\gamma$. This contradicts the fact that all $y_k$ belong to $E_x$.
Hence $y \in E_x$, so $E_x$ is closed and thus is compact. As $F_x$ is a component of $E_x$, we know that $F_x$ is closed in the compact set $E_x$ and hence is a closed set of $\bbC$. Consequently, it is also a compact set.\\

Remember that the complementary components of a compact set $Y$ in $\bbC$ coincide with the path-connected components of $\bbC \setminus Y$. This holds here for $Y=X,E_x$ and $F_x$.

Suppose that $\#\piz(\bbC \setminus X) = n$, \emph{i.e.}, that $\bbC \setminus X$ has $n$ components $P_1,\ldots, P_n$.  We first show that $\#\piz(\bbC \setminus E_x) \leq \#\piz(\bbC \setminus X)$. For each $i=1,\dots, n$, let $Q_i$ be the component of $\bbC\setminus E_x$ that contains $P_i$. Note that we may have $Q_i=Q_j$ for some $i\ne j$. For sure, $\bbC\setminus X\subset\bigcup_{i=1}^n Q_i\subset\bbC\setminus E_x$. We wish to prove that the latter inclusion is an equality. Let $y\in\bbC\setminus E_x$. If $y\in\bbC\setminus X$, then $y\in\bigcup_{i=1}^n Q_i $ by the above inclusion. If now $y\in X \setminus E_x$, there is a good cut $\gamma$ of $X$ that separates $x$ and $y$. Let $U_\gamma$ be the component of $\bbC \setminus \gamma$ that contains $y$. Then $\gamma$ is a good cut that separates $x$ from each point $z\in U_\gamma$, thus $U_\gamma \cap E_x$ is empty. Since the good cut $\gamma$ intersects $\bbC\setminus X=\bigcup_{i=1}^nP_i$,  $U_\gamma\cap P_i\ne\emptyset$ for some $i$ and hence $y\in U_\gamma\cup P_i$, a connected subset of $\bbC\setminus E_x$. This indicates that $y\in Q_i$. Therefore, $\bbC\setminus E_x\subset\bigcup_{i=1}^n Q_i$, hence $\bbC\setminus E_x=\bigcup_{i=1}^n Q_i$ and $\#\piz(\bbC \setminus E_x) \leq \#\piz(\bbC \setminus X)$. \\

The inequality $\#\piz(\bbC \setminus F_x) \ \leq \ \#\piz(\bbC \setminus E_x)$ follows from Lemma~\ref{comp-ineq}.
\end{proof}

The structure of fibers and pseudo-fibers is invariant by homeomorphism of the plane in the following sense.

\begin{prop}\label{Rmk:Invariance} Let $X\subset\mathbb{C}$ be a continuum such that $\piz(\bbC \setminus X)$ is finite and $h:\hat{\mathbb{C}}\to\hat{\mathbb{C}}$ a homeomorphism preserving $\infty$, \emph{i.e.}, $h(\infty)=\infty$. Let $Y:=h(X)$. Then:
\begin{itemize}
\item $\gamma$ is a good cut of $X$ separating $x,y\in X$ if and only if $h\circ\gamma$ is a good cut of $Y$ separating $h(x),h(y)$.
\item The (pseudo-)fibers of $Y$ are the images by $h$ of the (pseudo-)fibers of $X$:
$$\forall x\in X,\;E_{h(x)}=h(E_x)\;\;\textrm{and}\;\;F_{h(x)}=h(F_x).
$$
\end{itemize}
\end{prop}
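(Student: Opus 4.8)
The plan is to prove Proposition~\ref{Rmk:Invariance} by transferring the entire definitional apparatus through the homeomorphism $h$, establishing the good-cut statement first and then deducing the statement about (pseudo-)fibers as a formal consequence. The key observation is that every ingredient in the definition of a good cut is a topological notion preserved by a homeomorphism of $\hat{\mathbb{C}}$: being a simple closed curve, having finite intersection with the boundary, and lying partly in the complement of the continuum. Since $h(\infty)=\infty$, the map $h$ restricts to a homeomorphism of $\mathbb{C}=\hat{\mathbb{C}}\setminus\{\infty\}$ onto itself, so it also preserves the finiteness of $\piz(\mathbb{C}\setminus X)$; this guarantees that $Y=h(X)$ is again a continuum of the type for which good cuts and fibers are defined.

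First I would verify the good-cut equivalence. Let $\gamma$ be a simple closed curve in $\hat{\mathbb{C}}$. Then $h\circ\gamma$ is again a simple closed curve, since $h$ is a homeomorphism. Because $h$ is a bijection and $h(\partial X)=\partial(h(X))=\partial Y$ (boundary is preserved under a homeomorphism of the ambient space), we have $h(\gamma\cap\partial X)=(h\circ\gamma)\cap\partial Y$, so $\gamma\cap\partial X$ is a nonempty finite set if and only if $(h\circ\gamma)\cap\partial Y$ is. Likewise $h(\gamma\setminus X)=(h\circ\gamma)\setminus Y$, so the condition $\gamma\setminus X\neq\varnothing$ transfers exactly. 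Hence $\gamma$ is a good cut of $X$ if and only if $h\circ\gamma$ is a good cut of $Y$. For the separation statement, I would note that $h$ maps the components of $\mathbb{C}\setminus\gamma$ bijectively onto the components of $\mathbb{C}\setminus(h\circ\gamma)$; therefore $x,y\in X$ lie in distinct components of $\mathbb{C}\setminus\gamma$ exactly when $h(x),h(y)$ lie in distinct components of $\mathbb{C}\setminus(h\circ\gamma)$. Combining these two equivalences gives the first bullet. Since $h$ is a bijection and $\gamma\mapsto h\circ\gamma$ is a bijection between good cuts of $X$ and good cuts of $Y$ (with inverse $\delta\mapsto h^{-1}\circ\delta$, using that $h^{-1}$ is also a homeomorphism fixing $\infty$), the correspondence is genuinely two-sided, not merely one implication.

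Next I would deduce the (pseudo-)fiber statement. By definition, $y\in E_x$ means no good cut of $X$ separates $x$ from $y$. Using the good-cut equivalence just proved, together with the bijectivity of $\gamma\mapsto h\circ\gamma$ on good cuts, this holds if and only if no good cut of $Y$ separates $h(x)$ from $h(y)$, i.e.\ $h(y)\in E_{h(x)}$. Since $h$ is a bijection, running over all $y\in X$ yields $h(E_x)=E_{h(x)}$. For the fibers, recall that $F_x$ is the component of $E_x$ containing $x$. A homeomorphism carries the connected component of a point in a set to the connected component of the image point in the image set; applying this to $h$ restricted to $E_x$ (a homeomorphism onto $E_{h(x)}$) shows $h(F_x)$ is the component of $E_{h(x)}=h(E_x)$ containing $h(x)$, which is precisely $F_{h(x)}$.

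I do not anticipate a serious obstacle here: the statement is essentially a functoriality assertion, and the proof is a sequence of ``topological invariants are preserved'' verifications. The only point requiring a modest amount of care is the bookkeeping around $\infty$, namely ensuring that the hypothesis $h(\infty)=\infty$ is used exactly where needed: it is what makes $h$ restrict to a self-homeomorphism of $\mathbb{C}$, so that both the boundary $\partial X$ (computed in $\mathbb{C}$) and the separation of finite points by curves in $\mathbb{C}\setminus\gamma$ transfer correctly, while the curves themselves are allowed to pass through $\infty$ in $\hat{\mathbb{C}}$. I would state each preservation fact cleanly and let the fiber equalities follow formally, rather than re-deriving connectedness arguments by hand.
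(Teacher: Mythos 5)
Your proof is correct; the paper in fact states Proposition~\ref{Rmk:Invariance} without any proof, treating it as an immediate consequence of the fact that every notion in the definition of good cuts and (pseudo-)fibers is a topological one preserved by a homeomorphism of $\hat{\mathbb{C}}$ fixing $\infty$, and your write-up supplies exactly these routine verifications. The points that genuinely required care---that $h(\infty)=\infty$ makes $h$ a self-homeomorphism of $\mathbb{C}$, so that $\partial X$, the condition $\gamma\setminus X\neq\varnothing$, and the components of $\mathbb{C}\setminus\gamma$ all transfer, and that the two-sided correspondence of good cuts plus preservation of connected components yields the fiber equalities---are all handled correctly.
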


%%%%%%%%%%%%%%%%%%%%%%%%%%%%%%%%%%%%%%%%%%%%%%%%%
%%%%%%%%%%%%%%%%%%%%%%%%%%%%%%%%%%%%%%%%%%%%%%%%%
%%%%%%%%%%%%%%%%%%%%%%%%%%%%%%%%%%%%%%%%%%%%%%%%%
\section{Trivial fibers and local connectedness}\label{sec:proofmain}
In this section, we prove our main result Theorem~\ref{main}. We prove in Theorem~\ref{trivial->lc} that if a continuum $X\subset\bbC$ is not locally connected, then it contains a non-trivial fiber. The proof  of the ``converse'' is rather intricate. We aim at showing that if a continuum $X\subset\bbC$ with $\piz(\bbC\setminus X)<\infty$ is locally connected then the pseudo-fiber $E_x$ is trivial for each $x\in X$.  We will establish the result in the case $\piz(\bbC\setminus X)=1$ (Proposition~\ref{prop:lc->trivial_base}) and then use induction (Theorem~\ref{theo:lc->trivial}).

\begin{theo}
\label{trivial->lc}
If $X$ is not locally connected at $x\in X$, then the fiber $F_x$ is nontrivial.
\end{theo}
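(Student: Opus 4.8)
The plan is to manufacture from the non-local-connectedness a nondegenerate subcontinuum $K$ with $x\in K$ that no good cut can separate from $x$; then $K\subseteq E_x$, and since $K$ is connected and contains $x$ it lies in the component $F_x$ of $E_x$, so $F_x\neq\{x\}$. First note that a non-local-connectedness point must sit on the boundary: if $x\in X^\circ$, a round disk about $x$ lies in $X$ and $X$ is locally connected there. So I may assume $x\in\partial X$.

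To produce $K$ I would run the classical continuum-of-convergence construction. Using the failure of connectedness im kleinen at $x$, I fix $\varepsilon>0$ small enough that $A:=\overline{B}(x,\varepsilon)\cap X$ is a proper subset of $X$, together with points $p_n\to x$ lying outside the component of $x$ in $A$. Let $C_n$ be the component of $p_n$ in $A$, so that $x\notin C_n$. The boundary bumping theorem forces each $C_n$ to meet the sphere $\{|z-x|=\varepsilon\}$, whence $\diam C_n\geq\varepsilon/2$ for large $n$; by compactness of the hyperspace of subcontinua I pass to a subsequence with $C_n\to K$ in the Hausdorff metric. Then $K$ is a subcontinuum, $x\in K$ (because $p_n\in C_n$ and $p_n\to x$), and $\diam K\geq\varepsilon/2$, so $K$ is nondegenerate. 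The crucial structural fact I would record is that $K\subseteq\partial X$: if some $w\in K\cap X^\circ$, choose $\rho$ with $B(w,\rho)\subseteq X^\circ$; then $R:=B(w,\rho)\cap\overline{B}(x,\varepsilon)$ is convex, lies in $X$, hence in a single component $C_*$ of $A$, and $w\in C_*$. As $w\in K=\lim C_n$, for large $n$ some point of $C_n$ lands in $R\subseteq C_*$, which forces $C_n=C_*$ (distinct components are disjoint); then $K=C_*\ni x$, contradicting $x\notin C_n$.

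For the separation step, fix $y\in K$ and suppose a good cut $\gamma$ separates $x$ and $y$, say $x\in D_1$ and $y\in D_2$, where $D_1,D_2$ are the two components of $\hat{\bbC}\setminus\gamma$. Since $x$ and $y$ are limits of points of the $C_n$ lying in the open sets $D_1$ and $D_2$, each $C_n$ (for large $n$) meets both $D_1$ and $D_2$, hence crosses $\gamma$; pick $z_n\in C_n\cap\gamma\subseteq X\cap\gamma$. Every accumulation point of $(z_n)$ lies in $K\cap\gamma$, and by the structural fact such a point lies in $\partial X$, hence in the finite set $F:=\gamma\cap\partial X$. If some $b\in F$ equals $z_n$ for infinitely many $n$, then $b\in C_n$ forces each such $C_n$ to be the component of $b$ in $A$; this common component is then $K\ni x$, again contradicting $x\notin C_n$. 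Otherwise infinitely many $z_n$ lie in $X^\circ$; as $\gamma\cap X^\circ$ is a finite union of open arcs with endpoints in $F$, infinitely many of these $z_n$ lie on one such arc $I$ and tend to an endpoint $z_*\in F$. Following $I$ toward $z_*$ links these $z_n$ within $X^\circ$, and, when $z_*$ lies in the open ball $B(x,\varepsilon)$, within $A$, so the corresponding $C_n$ again coincide and equal $K\ni x$, the same contradiction. Thus no good cut separates $x$ from $y$; as $y\in K$ was arbitrary, $K\subseteq E_x$, and being connected with $x\in K$ it lies in $F_x$, so nondegeneracy of $K$ gives $F_x\neq\{x\}$.

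The step I expect to fight hardest with is the last one: when the interior crossings $z_n$ accumulate at a boundary point $z_*$ lying on the sphere $\{|z-x|=\varepsilon\}$, the connecting arc $I$ may leave $\overline{B}(x,\varepsilon)$, so the ``same component'' conclusion is not automatic. Handling this, by localizing with the finitely many arcs of $\gamma\setminus\partial X$ and arranging the radius (or replacing the round ball by a neighborhood whose frontier meets $K$ tamely) so that such accumulation cannot persist, is the technical core of the argument. One should also check that the input to the construction, namely the failure of connectedness im kleinen, is genuinely available, which it is at the non-locally-connected points that drive Theorem~\ref{main}.
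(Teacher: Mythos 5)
Your strategy is the same as the paper's: build a nondegenerate convergence continuum $K\ni x$ from components of $\overline{B}(x,\varepsilon)\cap X$ that miss $x$, then show no good cut $\gamma$ can separate $x$ from a point of $K$, by forcing the crossing points $z_n\in C_n\cap\gamma$ to be linked, along one of the finitely many arcs of $\gamma\setminus\partial X$ contained in $X^o$, into a single component of $\overline{B}(x,\varepsilon)\cap X$, contradicting $x\notin C_n$. (Your structural fact $K\subseteq\partial X$ is a correct extra; the paper does not need it, since it treats the accumulation point of the crossing points directly, whether it lies in $\partial X$ or in $X^o$.) But the gap you flag at the end is genuine and is precisely the crux: when the interior crossings accumulate at a point $z_*\in\gamma\cap\partial X$ lying on the sphere $\{|z-x|=\varepsilon\}$, the terminal piece of the arc $I$ may exit $\overline{B}(x,\varepsilon)$, so the $z_n$ are linked in $X^o$ but not in $A$, and no contradiction follows. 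Your proposed repairs do not work as stated: you cannot ``arrange the radius'' afterwards, because $\varepsilon$ must be fixed before $K$ is built, while the offending point $z_*$ depends on the good cut $\gamma$, which is quantified over only after $K$ exists (one $\gamma$ for each $y\in K$).

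The paper closes exactly this gap with a two-radius (nested disk) argument, which you could adopt with no other change. The approximating points $x_k\to x$ are chosen outside the component $P_x$ of $B(x,r)\cap X$ (large disk), with $P_k\ne P_x$ pairwise disjoint, but the convergence continuum $Q_\infty$ is formed from the components $Q_k\subseteq P_k$ of $B(x,\tfrac12 r)\cap X$ (small disk). The crossing points $u_k\in Q_k\cap\gamma$ then lie in the small disk, so their accumulation point $u_\infty$ is at distance at most $\tfrac12 r$ from $x$, hence \emph{interior} to the large disk; a short subarc $I$ of the arc of $\gamma\setminus\partial X$ adjacent to $u_\infty$ therefore satisfies $\overline{I}\subset B(x,r)\cap X$. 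Now $Q_\infty\cup\overline{I}$ is a connected subset of $B(x,r)\cap X$ containing $x$, hence lies in $P_x$, while $I$ contains infinitely many $u_{k_i}\in P_{k_i}$, contradicting disjointness of the components of the \emph{large} disk. In short: trap the crossings in the small disk, derive the component contradiction in the large one. Finally, the other caveat you raise --- that failure of local connectedness at $x$ only yields failure of connectedness im kleinen at some nearby point, not necessarily at $x$ itself --- applies verbatim to the first line of the paper's own proof, and is harmless for the global equivalence in Theorem~\ref{main}, so it should not be counted against your argument specifically.
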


\begin{proof}

If $X$ is not locally connected at $x\in X$
then there exists a neighborhood $V$ of $x$
such that the connected component $Q$ of $V$ containing $x$ is not a neighborhood of $x$.

Let $U$ be an open set of $\bbC$ with $(U\cap X)\subset V$ and fix a closed disk $B(x,r)$ on the plane with $B(x,r)\subset U$, by choosing small enough radius $r>0$. Then \[(B(x,r)\cap X)\subsetneqq (U\cap X)\subset V.\]
Here the component $P_x$ of $B(x,r)\cap X$ containing $x$
is not a neighborhood of $x$, since $P_x\subset Q$.
That is to say, there exist an infinite sequence of
distinct points $\{x_k\}$ with values in $(B(x,r)^o\cap X)\setminus P_x$ such that $\lim_{k\rightarrow\infty}x_k=x$.

Let $P_k$ denote the component of $B(x,r)\cap X$ that contains $x_k$ and $P_x$ the component containing $x$. As $x_k\notin P_x$ for all $k\ge1$, we may assume that $P_k\cap P_l=\emptyset$ for $k\ne l$. In the following, denote by $Q_k$ the component of $B(x,\frac{1}{2}r)\cap X$ containing $x_k$ and $Q_x$ the one containing $x$. Then $Q_x\subset P_x$ and $Q_k\subset P_k$.

By~\cite[Section 42, Chapter I, Theorem 1]{Kur68} and~\cite[Section 42, Chapter II, Theorem]{Kur68}, we may assume that $\{Q_k\}$
is a convergent sequence under Hausdorff distance, by replacing it with an appropriate subsequence. As $B(x,\frac{1}{2}r)\cap X$ is a closed proper subset of $X$,
each of its components intersects the boundary of $B(x,\frac{1}{2}r)$ \cite[Section 47, Chapter II, Theorem 1]{Kur68}. Let $Q_\infty = \lim_{k \rightarrow \infty} Q_k$, and $\diam(Q_k)$ the diameter of $Q_k$. Then $Q_\infty$ is a subcontinuum of $ Q_x$ and $\diam(Q_\infty) \geq \frac{1}{2}r$, since $x\in Q_\infty$  and $Q_\infty\cap \partial B(x,\frac{1}{2}r)\ne\emptyset$. Consequently, the proof is completed by the following claim.

\begin{clai-nn}
$Q_\infty\subset E_x$ hence $Q_\infty\subset F_x$.
\end{clai-nn}

Otherwise, there is a good cut $\gamma$ of $X$ separating $x$ from a point $y\in Q_\infty\setminus\{x\}$. Since $y \in Q_\infty$, there is a point $y_k \in Q_k$ for all $k$ such that $y\in\overline{\{y_k: k\}}$. We may assume that $\lim_{k \rightarrow \infty} y_k = y$ by replacing $\{y_k\}$ with an appropriate subsequence.
Fix an integer $N$ such that $x_k$ and $y_k$ are separated by the good cut $\gamma$ for all $k \geq N$.

Let $A_k = \gamma \cap Q_k$ for $k \geq N$. By replacing with an appropriate subsequence, we may assume that  $\{A_{k}\}$ is convergent under Hausdorff distance. Let $A_\infty = \lim_{i\rightarrow \infty} A_{k}$. We have $A_\infty \subset (Q_\infty\cap \gamma)$. Fix $u_\infty \in A_\infty$ and $u_k \in A_{k}$ with $\lim_{k \rightarrow \infty} u_k = u_\infty$.
Denote by $\gamma_1$ and $\gamma_2$ the two components of $\gamma \setminus \{u_1, u_\infty\}$.  Clearly, either $\gamma_1$ or $\gamma_2$ (say, $\gamma_1$)
contains an infinite subsequence of $\{u_k\}$.

As $\#\gamma \cap \partial X<\infty$, $\gamma_1 \setminus \partial X$ is the union of finitely many open arcs: $\gamma_1 \setminus \partial X = I_1 \cup \cdots \cup I_n$.
Then, there exists a unique $I_j$ with $u_\infty \in \overline{I_j}$. Therefore, $I_j$ contains an infinite subsequence $\{u_{k_i}\}$ of $\{u_k\}$.

On the one hand, $u_{k_i} \in X^o$ and $I_j\cap(\partial X)=\emptyset$, so we have $I_j \subseteq X^o$. On the other hand, $u_\infty\in (B(x,\frac{1}{2}r)\cap X)$, so we may fix a subarc $I$ of $I_j$ with \[u_\infty\in\overline{I}\subset(B(x,r)\cap X)\subset V.\]
Then $Q_\infty\cup I$ is a connected subset of $P_x$. However, $I$, and hence $Q_\infty\cup I$, contains infinitely many points in $\{u_{k_i}\}$, where $u_{k_i}\in P_{k_i}$ for each $i$. This contradicts the fact $P_x$ is the component of $B(x,r)\cap X$ containing $x$.
\end{proof}

We now turn to the converse part of our main theorem. Let us consider a special case.

\begin{prop}
\label{prop:lc->trivial_base}
Let $X \subseteq \bbC$ be a continuum such that $\bbC\setminus X$ is connected.
If $X$ is locally connected, then the pseudo fiber $E_x$ is trivial for all $x \in X$.
\end{prop}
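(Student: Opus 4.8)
The plan is to reduce the statement to the separation of two boundary points, and then to produce an explicit good cut with the help of the boundary parametrization furnished by Carathéodory's theorem.

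First I would dispose of the interior points. By Corollary~\ref{interior-E} we already have $E_x=\{x\}$ whenever $x\in X^o$. Since ``being separated by a good cut'' is a symmetric relation, this shows that an interior point $y\in X^o$ is separated from \emph{every} other point of $X$, so $y$ never lies in a nontrivial pseudo-fiber and $E_y$ is itself trivial. Consequently it suffices to prove that any two \emph{distinct boundary points} $x,y\in\partial X$ are separated by a good cut of $X$.

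Next I would set up the exterior uniformization. As $X$ is a continuum with $\bbC\setminus X$ connected, the domain $G=\hat\bbC\setminus X$ is simply connected; since $X$ is locally connected, Theorem~\ref{TorTheo} (Torhorst) shows that $\partial G=\partial X$ is a locally connected continuum, so Theorem~\ref{CaraTheo} (Carathéodory) provides a continuous extension $\overline\varphi\colon\overline\Delta\to\overline G$ of the Riemann map. In particular every external ray $R_X(\theta)$ lands, and $\gamma:=\overline\varphi|_{\bbS^1}\colon\bbS^1\to\partial X$ is a continuous surjection sending $\theta$ to the landing point of $R_X(\theta)$. Now fix $a\in\gamma^{-1}(x)$ and $b\in\gamma^{-1}(y)$, so $a\neq b$. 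The good cut I would build is a realization of Schleicher's second type of separation line: choose angles $u,v$ that separate $a$ from $b$ on $\bbS^1$ and whose landing points $p_1=\gamma(u)$, $p_2=\gamma(v)$ differ from $x$ and $y$. Then $R_X(u)\cup R_X(v)\cup\{\infty\}=:\Lambda$ is a simple arc in $\overline G$ meeting $\partial X$ exactly in $\{p_1,p_2\}$; it is a crosscut of $G$, splitting $G$ into two subdomains from which $x$ and $y$ are respectively accessible (because $u,v$ separate $a,b$). Closing $\Lambda$ by a simple arc $\mu\subset X$ from $p_2$ to $p_1$ yields a Jordan curve $J=\Lambda\cup\mu$ with $\Lambda\setminus X\neq\varnothing$, and the crosscut property then forces $x$ and $y$ into different components of $\hat\bbC\setminus J$.

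The main obstacle is precisely the choice of the closing arc $\mu$: a good cut may meet $\partial X$ in only finitely many points, so $\mu$ must not travel along $\partial X$. When $X^o\neq\varnothing$ I would pick $p_1,p_2$ accessible from the interior and route $\mu$ so that $\mu\setminus\{p_1,p_2\}\subset X^o$, whence $\mu\cap\partial X=\{p_1,p_2\}$; here local connectedness of $X$ and of $\partial X$ (Theorem~\ref{TorTheo}), together with Carathéodory accessibility, supplies the required arcs, and the reasoning runs parallel to Proposition~\ref{interior-J}. The degenerate case $X^o=\varnothing$ needs a separate treatment: then $X$ contains no simple closed curve (such a curve would bound a bounded complementary component of $X$, contradicting connectedness of $\bbC\setminus X$), so $X$ is a dendrite; choosing a cut point $c\neq x,y$ of $X$ that separates $x$ from $y$, two external rays land at $c$ from its two sides and their union is already a Jordan good cut meeting $\partial X$ only in $\{c\}$, so no closing arc is needed. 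In either case the technical heart is to verify simultaneously that $\#(J\cap\partial X)<\infty$ and that $J$ genuinely places $x$ and $y$ in distinct complementary components, which is where the continuity of $\overline\varphi$ and local connectedness are used in an essential way.
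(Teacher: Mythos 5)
Your overall strategy (reduce to two boundary points, extend the Riemann map by Carath\'eodory, cut with two external rays, close up inside $X$) is the same as the paper's, but your case analysis has a genuine gap: the dichotomy ``$X^o\neq\varnothing$, so route the closing arc through the interior'' versus ``$X^o=\varnothing$, so $X$ is a dendrite and a cut point suffices'' is not exhaustive in the relevant sense. The problematic configurations are those where $X^o\neq\varnothing$ but $x$ and $y$ are separated within $X$ by a cut point lying on no interior component's boundary. Concretely, let $X$ be two closed disks joined by a straight segment (a ``dumbbell''), and let $x,y$ be two interior points of the segment. Then $\bbC\setminus X$ is connected, $X$ is locally connected, and $X^o\neq\varnothing$, so your prescription applies: choose $p_1,p_2$ accessible from $X^o$ and a closing arc $\mu$ with $\mu\setminus\{p_1,p_2\}\subset X^o$. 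But any such $\mu$ lies in the union of the two closed disks, and being connected it lies in one of them; hence the Jordan curve $J=\Lambda\cup\mu$ is disjoint from the open segment, which is a connected set containing both $x$ and $y$. So $J$ cannot separate $x$ from $y$: no curve of your prescribed form works here, and your dendrite fallback does not apply since $X^o\neq\varnothing$. (A secondary, related gap: even when interior routing is possible, you need $p_1,p_2$ accessible from a \emph{common} component of $X^o$, which your choice of angles does not guarantee.)

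What is missing is precisely the paper's second case. In the paper, one first builds two arcs $A_1,A_2\subset\partial X$ from $x$ to $y$ inside the images $\varphi(\alpha_1),\varphi(\alpha_2)$ of two circular arcs separating the preimages of $x$ and $y$. If $A_1$ and $A_2$ can be taken disjoint apart from $x,y$, then $\Gamma=A_1\cup A_2\cup\{x,y\}$ is a Jordan curve contained in $\partial X$ whose bounded complementary domain lies in $X^o$ (this uses connectedness of $\bbC\setminus X$); that Jordan domain simultaneously supplies the closing arc and the accessibility of the two landing points $\varphi(t_1)\in A_1$, $\varphi(t_2)\in A_2$ from one and the same interior region, which repairs both gaps above. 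Otherwise $\varphi(\alpha_1)\cap\varphi(\alpha_2)\neq\varnothing$, i.e.\ there are angles $t_3,t_4$ on the two sides with a common landing point $c=\varphi(t_3)=\varphi(t_4)\notin\{x,y\}$, and then the union of the two external rays at angles $t_3,t_4$, closed up through $\infty$ and $c$, is already a good cut separating $x$ from $y$ --- no closing arc at all (in the dumbbell, $c$ is any segment point between $x$ and $y$). You do have this two-rays-at-a-common-point idea, but you deploy it only when $X^o=\varnothing$; the repair is to replace your dichotomy by the paper's, which is governed by whether the two boundary routes from $x$ to $y$ meet, not by whether $X^o$ is empty.
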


\begin{proof}

By Corollary~\ref{interior-E}, the pseudo fiber $E_x$ of $X$ at $x\in X^o$ is trivial. Therefore, let $x\in \partial X$. To obtain that the pseudo fiber $E_x$ of $X$ at $x$ is trivial, we consider $y\in X\setminus\{x\}$ and show that $x$ and $y$ can be separated by a good cut of $X$.  Again, if $y\in X^o$, then the pseudo-fiber $E_y$ of $y$ is trivial, hence $x$ and $y$ can be separated by a good cut of $X$. Thus we suppose that $y\in\partial X$.

By assumption, $U_\infty:=\bbC\setminus X$ is a simply connected domain. We denote by $\varphi$ the Riemann mapping from $\{z\in\bbC: |z|>1\}$ onto $\bbC\setminus X$ that fixes $\infty$. By the theorem of Caratheodory (see Theorem~\ref{CaraTheo}), $\varphi$ can be continuously extended to the unit circle $\mathbb{S}^1:=\{z: |z|=1\}$. That is,  we consider
$$\varphi:\{z: |z|\geq1\}\to\overline{U_\infty}=\overline{\mathbb{C}\setminus X}$$ as a continuous onto mapping whose restriction to $\{z: |z|>1\}$ is a conformal mapping onto $\mathbb{C}\setminus X$.

Since the pre-images $\varphi^{-1}(x)$ and $\varphi^{-1}(y)$
are two nonempty disjoint compact sets of $\mathbb{S}^1$,
 we can find two open arcs $\alpha_1, \alpha_2$ on $\mathbb{S}^1$ with end points $a_i$ and $b_i$
such that $\varphi(a_i)=x$,
$\varphi(b_i)=y$,
and $\varphi(\alpha_i) \cap \{x,y\} = \varnothing$. Note that $\varphi$ may not be injective on the unit circle. But $\varphi(\alpha_i)\cup \{x,y\}$ is arcwise-connected for $i=1,2$, since it is the continuous image of the arc $\alpha_i\cup\{a_i,b_i\}$. Hence one can find simple arcs $A_1,A_2$ with end points $x,y$ and $A_i\subset\varphi(\alpha_i)$.

\paragraph{Case 1.} $A_1\cap A_2=\varnothing$, see Figure~\ref{fig:ray2}.

In this case, $\Gamma := A_1\cup A_2\cup\{x,y\}$
is a simple closed curve on $\partial X$. As $\mathbb{C}\setminus X$ is assumed to be connected, the bounded component $B$ of
$\mathbb{C}\setminus\Gamma$ does not intersect $\partial X$. Indeed, $B$ being an open set, from $B\cap\partial X\ne\emptyset$ would follow that $B\cap (\mathbb{C}\setminus X)\ne\emptyset$, that is, $B\cap U_\infty\ne\emptyset$. As  $\partial B=\Gamma\subset X$, a connectedness argument would lead to $U_\infty\subset B$, a contradiction.  Now, we fix a point $t_i \in \alpha_i$ such that $\varphi(t_i)\in A_i$ for $i=1,2$ and let $W_i := \{kt_i : k \in \left[1,\infty\right[\}$.
Moreover, choose an open arc $\eta$ in the bounded component of $\bbC\setminus\Gamma$ that connects $\varphi(t_1)$ and $\varphi(t_2)$.
Then $\gamma:=\varphi(W_1\cup W_2 \cup \{\infty\}) \cup \eta$
is a simple closed curve satisfying $\gamma\cap\partial X=\{\varphi(t_1),\varphi(t_2)\}$, a finite set. Finally, note that $x$ and $y$ belong to distinct components of $\mathbb{C}\setminus\gamma$. Hence $\gamma$ is a good cut of $X$ separating $x$ and $y$.
\begin{figure}[H]
\centering
\myvcenter{\includegraphics{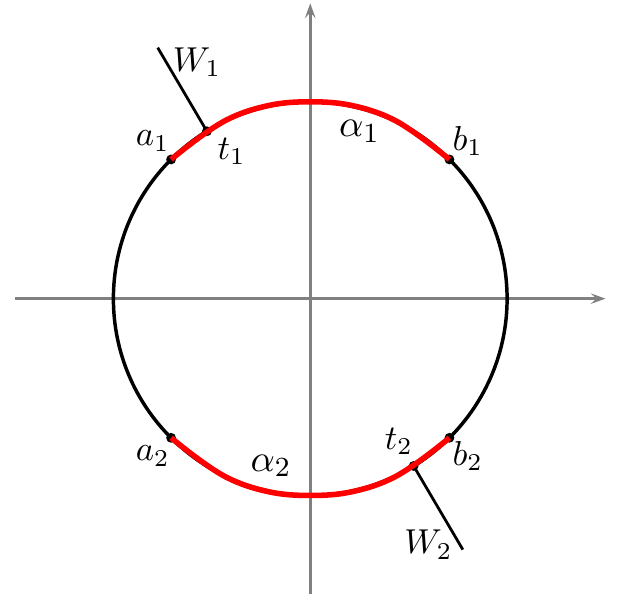}}
\myvcenter{\includegraphics{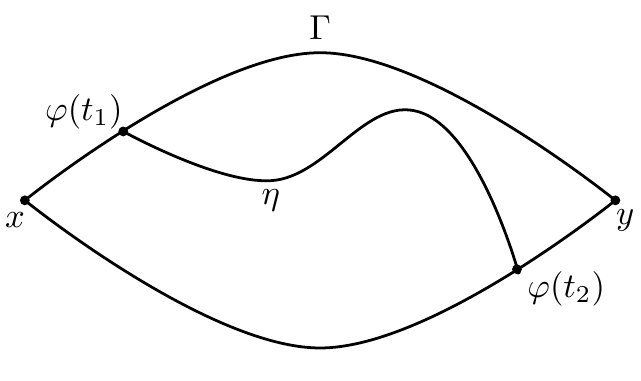}}
\caption{
Proof of Case 1 of Claim 2.}
\label{fig:ray2}
\end{figure}

\paragraph{Case 2.} $\varphi(\alpha_1)\cap\varphi(\alpha_2)\neq\varnothing$, see Figure~\ref{fig:ray3}.

In this case, there exist $t_3\in\alpha_1$ and $t_4\in\alpha_2$
with $\varphi(t_3)=\varphi(t_4)\notin\{x,y\}$.
For small $\varepsilon>0$,
choose two segments $A = \{ka_1 : 1\leq k\leq 1+\varepsilon\}$,
$B=\{kb_1: 1 \leq k \leq 1+\varepsilon\}$
and a circular arc $\beta := \{(1+\varepsilon)z : z\in\alpha_1\}$.
Moreover, let $W_i:=\{kt_i : k\in\left[1,\infty\right[\}$ for $i=3,4$.
Then $\gamma':\varphi(W_3\cup W_4\cup\{\infty\})$
is a simple closed curve whose intersection with  $\partial X$ is the finite set $\{\varphi(t_3)=\varphi(t_4)\}$.  Also, $\gamma'$ transversally crosses
the arc $\varphi(A\cup \beta\cup B)$ and hence is a good cut separating $x$ and $y$. \begin{figure}[H]
\centering
\myvcenter{\includegraphics{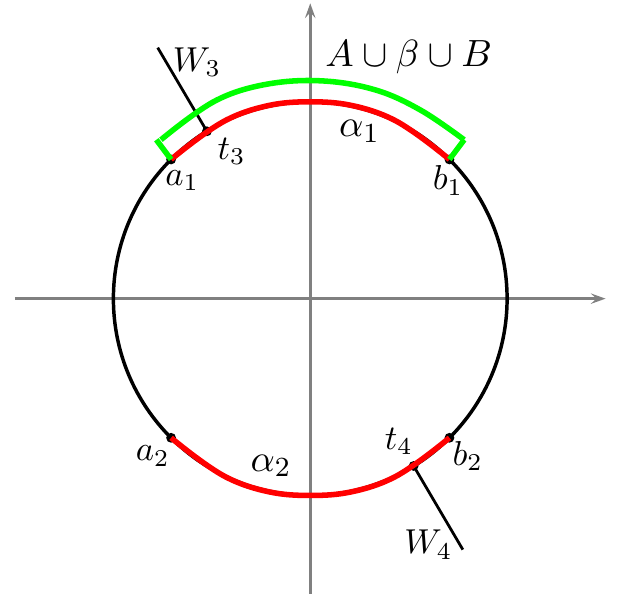}}
\myvcenter{\includegraphics{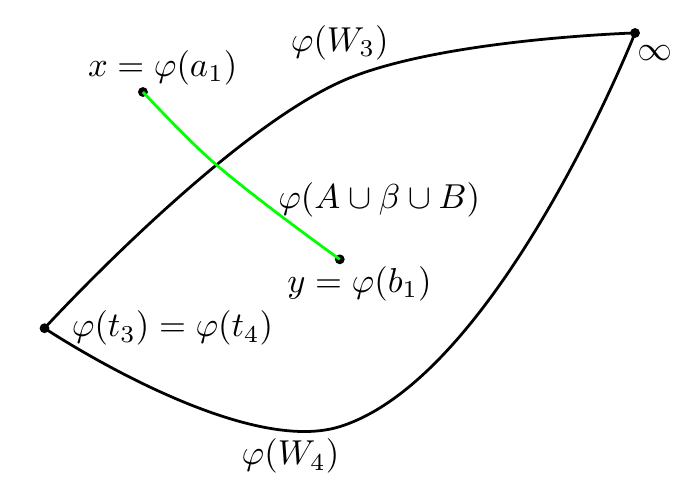}}
\caption{
Proof of Case 2 of Claim 2.
}
\label{fig:ray3}
\end{figure}
\end{proof}
\begin{rema}
Note that in Case 2 of the above proof we have been able to construct a good cut separating $x$ and $y$ that touches $X$ in only one point. This is possible when $x$ and $y$ are on ``distinct sides'' of a cut point.
\end{rema}

We finally deal with the general case.
\begin{theo}
\label{theo:lc->trivial}
Let $X \subseteq \bbC$ be a continuum such that $\pi_0(\bbC \setminus X)$ is finite.
If $X$ is locally connected, then the pseudo fiber $E_x$ is trivial for all $x \in X$.
\end{theo}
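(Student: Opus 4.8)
The plan is to argue by induction on the number $n=\#\piz(\bbC\setminus X)$ of complementary components, the base case $n=1$ being exactly Proposition~\ref{prop:lc->trivial_base}. So fix $x\neq y$ in $X$. Since by Corollary~\ref{interior-E} the pseudo-fiber of any interior point of $X$ is already trivial, I may assume $x,y\in\partial X$, and it suffices to produce a good cut of $X$ separating them. Writing $\bbC\setminus X=U_1\sqcup\cdots\sqcup U_n$ for the complementary components, one has $\partial X=\bigcup_i\partial U_i$ and every boundary point lies on at least one $\partial U_i$; this will let me keep track of how a separating curve meets $\partial X$.

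For the inductive step ($n\geq 2$) I would fill in one bounded complementary component $U:=U_n$, setting $X':=X\cup U$, so that $\bbC\setminus X'=U_1\sqcup\cdots\sqcup U_{n-1}$ has one fewer component and $\partial X=\partial X'\cup\partial U$. First I would check that $X'$ is again a locally connected continuum. It is compact and connected because $X$ and $\overline U$ are and they meet along $\partial U\neq\varnothing$. Moreover $\partial U$ is locally connected by the Torhorst theorem (Theorem~\ref{TorTheo}), so Carathéodory's theorem (Theorem~\ref{CaraTheo}) makes the Riemann map of $U$ extend continuously to $\overline{\bbD}$ and exhibits $\overline U$ as a continuous image of the closed disk, i.e. as a Peano continuum; and the union of two Peano continua meeting in a nonempty set is again a Peano continuum (concatenate, via Hahn–Mazurkiewicz, a surjection $[0,\tfrac12]\to X$ ending at a shared point with a surjection $[\tfrac12,1]\to\overline U$ starting there). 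The induction hypothesis applied to $X'$ then produces a good cut $\gamma$ of $X'$ separating $x$ and $y$: a simple closed curve with $\gamma\cap\partial X'$ finite and $\gamma\setminus X'\neq\varnothing$.

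It remains to convert $\gamma$ into a good cut of $X$. Since $X\subseteq X'$ we already have $\gamma\setminus X\supseteq\gamma\setminus X'\neq\varnothing$, and because $\partial X=\partial X'\cup\partial U$ the only obstruction is that $\gamma\cap\partial U$ could be infinite. If $\gamma\cap\overline U=\varnothing$ I am done at once, for then $\gamma\cap\partial X=\gamma\cap\partial X'$ is finite. In general I would modify $\gamma$ only inside $\overline U$: the set $\gamma\cap U$ is relatively open in $\gamma$, hence a union of crosscuts of $U$, and the idea is to replace the whole of $\gamma\cap\overline U$ by \emph{finitely many} crosscuts running through $U$, using the continuous extension $\psi:\overline{\bbD}\to\overline U$ to transport the entry and exit data to the circle $\partial\bbD$, where crosscuts and their endpoints are under control. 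Choosing $U$ so that $x,y\notin\overline U$ is convenient here: then $\gamma\cup\overline U$ still separates $x$ from $y$ (the components of $x$ and $y$ in $\bbC\setminus(\gamma\cup\overline U)$ are contained in their distinct components in $\bbC\setminus\gamma$, since $x,y$ lie off the added connected set $\overline U$), which guarantees that a curve obtained by rerouting through $U$ can still be arranged to separate $x$ and $y$.

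The main obstacle is exactly this conversion: guaranteeing that the modified curve meets $\partial U$ in only finitely many points while remaining a simple closed curve that separates $x$ and $y$. The difficulty is that $\partial U$ may be a genuinely wild Peano continuum, so that $\gamma\cap\partial U$ is a priori an infinite, possibly Cantor-like, set, and no naive transversality argument is available. I expect the clean route is to carry out the whole rerouting in the disk model via $\psi$, reducing the infinitely many excursions of $\gamma$ into $U$ to finitely many by a compactness and cyclic-ordering argument on $\partial\bbD$, and then pushing the result back by $\psi$. A secondary case to dispose of is the configuration where every complementary component already borders $x$ or $y$, so that no bounded hole avoids both points; there I would instead build the good cut directly, in the spirit of the two cases of Proposition~\ref{prop:lc->trivial_base}, by running rays into the components bordering $x$ and $y$ and joining them by an arc through $X^o$, the crossings of any remaining holes being again controlled by the same crosscut-rerouting device.
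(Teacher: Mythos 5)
Your induction setup is sound and genuinely different from the paper's: you enlarge $X$ by filling in a bounded complementary component ($X'=X\cup U$), whereas the paper never enlarges $X$ but instead transports the problem through quotient maps (a cylinder tiling plus a Brown collapse when the boundaries $\partial U_i$ are pairwise disjoint, and a ``gluing'' map $q$ identifying two complementary components through a common boundary point otherwise). Your preliminary steps are correct: $X'$ is a locally connected continuum with $n-1$ complementary components, $\partial X=\partial X'\cup\partial U$, and $\gamma\cup\overline{U}$ still separates $x$ from $y$ when $x,y\notin\overline{U}$. But the step you yourself call ``the main obstacle'' is a genuine gap, and it is not a technical verification --- it is the crux of the theorem. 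A good cut $\gamma$ of $X'$ controls only $\gamma\cap\partial X'$; the set $\gamma\cap\partial U$ can be an infinite, Cantor-like set, and $\gamma$ can make infinitely many excursions into $U$, into $X^o$, and into the other holes between points of $\partial U$. To repair $\gamma$ you must select \emph{finitely many} of the arcs of $\gamma\setminus\overline{U}$, join their endpoints by finitely many crosscuts of $U$, and prove that some such selection yields a \emph{simple closed} curve that still separates $x$ from $y$ and still satisfies $\tilde\gamma\setminus X\neq\varnothing$. Since $\partial U$ need not be a manifold, no transversality or general-position argument is available; the ``compactness and cyclic-ordering argument on $\partial\bbD$'' is precisely the missing lemma, and nothing in the sketch says how the selection is made consistent (which arcs to keep, which preimage on $\partial\bbD$ to assign to an endpoint when $\psi$ is not injective there, why the pushed-forward chords do not recross the kept arcs or one another). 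Note the contrast with the paper: its maps $q$, $\varphi$, $\psi$ are injective except over a single point lying in $X$, so good cuts transport back and forth with automatically finite intersection with the relevant boundaries; your inclusion $X\hookrightarrow X'$ gives no such control over $\gamma\cap\partial U$, and that is exactly where the induction fails to close.

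The ``secondary case'' is a second genuine gap, not a corner case. Already for $n=2$ with both $x$ and $y$ on the boundary of the unique bounded component (an entirely typical configuration, e.g.\ an annulus-like $X$), there is no bounded hole avoiding $\{x,y\}$, so your main mechanism never starts. The proposed fallback --- run rays into the components bordering $x$ and $y$ and join them through $X^o$ --- cannot be done ``in the spirit of Proposition~\ref{prop:lc->trivial_base}'': that proof lives inside a single complementary component whose extended Riemann map sees both $x$ and $y$ on one boundary circle, whereas here the rays lie in \emph{different} components $U_i$, $U_j$, and the connecting arc must cross $X$ while meeting $\partial X$ only finitely often --- which is again the unproved rerouting problem. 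This configuration is exactly what the paper's Case 1 (tiling, Torhorst's theorem, Brown collapse) and Case 2 (the gluing map, including the delicate subcase $x=x_0\in\partial U_i\cap\partial U_j$) are built to handle. As it stands, both halves of your inductive step rest on the same unproven lemma, so the proposal is a plausible outline rather than a proof.
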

\begin{proof}
We give only a sketch of the proof. The details can be found in Section~\ref{sec:DetailedProof}.

Let $n\geq 1$ and $X$ be a locally connected continuum with $n$ complementary components. We will prove the result by induction on $n$. If $n=1$, the result holds by Proposition~\ref{prop:lc->trivial_base}. We now fix $n\geq 2$ and assume that the result holds for every locally connected continuum with at most $n-1$ complementary components.

Let $X\subseteq\mathbb{C}$ be a locally connected continuum such that  $\#\pi_0(\bbC \setminus X)=n\geq 2$. We denote by $U_1, \ldots, U_n$  the components of $\bbC \setminus X$. For  $x\in X$,  we need to show that every point $y$ in $X\setminus\{x\}$ is separated from $x$ by a good cut of $X$. By  Proposition~\ref{interior-J}, we may assume that $x\ne y\in\partial X=\bigcup_{j=1}^n\partial U_j$. The idea is to construct a mapping that is almost a homeomorphism but erases at least one complementary component in order to use the induction hypothesis. We consider two cases.

\textbf{Case 1:} $\partial U_i \ \cap \ \partial U_j = \varnothing$ for all $1\le i< j\le n$.

We choose two open disks in distinct components of $\bbC\setminus X$ and send the extended complex plane by a homeomorphism $h$ onto a cylinder $S^2$, whose top and bottom are the images of the disks, thus lie away from $h(X)$. Suppose $x\in\partial U_i$ and $y\in \partial U_j$ for some $1\le i, j\le n$. We tile the cylinder with tiny enough curvilinear rectangles on the side surface and the top and bottom disks (see Figure~\ref{fig:brickwall}). For each $k=1,\ldots n$, the union of the tiles intersecting $h(\overline{U_k})$ is a locally connected continua without cut points that covers $h(\overline{U_k})$.

If $i\ne j$, we apply Theorem \ref{TorTheo} of Torhorst to separate $h(x)$ from $h(y)$ by a simple closed curve contained in $h(X^o)$. Its pre-image lies in $X^o$ and separates $x$ from $y$, hence, by Proposition~\ref{interior-J}, we are done.

If $i=j$, a theorem of Brown~\cite[Theorem 1]{Brown} allows us to use a continuous mapping of the cylinder onto itself that shrinks down ``almost homeomorphically'' to a single point at least one complementary component $h(U_j)$ ($j\ne i$) of $h(X)$. In this way, the number of complementary components of the locally connected continuum $q(h(X))$ has been reduced to at most $n-1$, and we apply the induction hypothesis as well as Proposition~\ref{Rmk:Invariance} to get a good cut of $X$ separating $x$ and $y$.

\textbf{Case 2:} $\partial U_i \ \cap \ \partial U_j \neq \varnothing$ for some $i \neq j$.

Select such a pair $(i,j)$. Here the idea is to reduce the number of complementary components by ``gluing together'' $U_i$ and $U_j$ via an open arc. We take $x_0\in\partial U_i \ \cap \ \partial U_j $ and select two bounded Jordan domains entirely lying inside $U_i$ and $U_j$, up to their unique intersection point $x_0$ (Figure~\ref{fig:touchjor}).

We want to mimic this configuration by two right isosceles triangles $\Delta_1$, with vertices $x_0,x_0-1$ and $x_0-1+ {\bf i}$, and $\triangle_2$, with vertices $x_0,x_0+1$ and $x_0+1+{\bf i}$. To this effect, we choose a homeomorphism $h$ of the extended complex plane sending the two triangles intersecting at $x_0$ onto the bounded Jordan regions (Figure~\ref{fig:h-mapping}). Now, we glue the triangles at $x_0$ by constructing a mapping $g:\hat\bbC\to\hat\bbC$ that sends the upright segment $[x_0,x_0+{\bf i}]$ down to $x_0$ and is injective otherwise (Figure~\ref{fig:g-mapping}).

Setting $q:=h\circ g:\hat\bbC\to\hat\bbC$ and $Y:=q^{-1}(X)\setminus\{x_0+t{\bf i}: 0<t<1\}$, we prove that $Y$ is now a locally connected continuum whose complement has at most $n-1$ components and are able to apply the induction hypothesis on $Y$. However, if $\gamma$ is a good cut of $Y$ separating $x$ from $y$, $q(\gamma)$ is a locally connected continuum that may not be a simple closed curve and has finite, but possibly empty intersection with $\partial X$. Therefore, we distinguish two subcases. 

If $x\ne x_0\ne y$, using Theorem~\ref{TorTheo}  of Torhorst, we find a simple closed curve $J$ on the boundary of $\hat\bbC\setminus q(\gamma)$ separating $x$ and $y$. In the case $J$ is not a good cut, that is, $J\subset X$, we apply the theorem of Brown~\cite[Theorem 1]{Brown} to shrink $J$ together with its complementary component containing $y$ ``almost homeomorphically'' to a single point, via a mapping $\varphi$. We then apply again the induction hypothesis, on $\varphi(X)$, and obtain a good cut of $X$.

Otherwise, we consider \emph{w.l.o.g.} $x_0=x$. We enclose the segment $[x_0,x_0+{\bf i}]$ by a locally connected continuum with no cut points and avoiding $q^{-1}(y)$: we use here two good cuts of $Y$ that separate $x_0$ and $x_0+{\bf i}$ from $q^{-1}(y)$ as well as a simple closed curve disjoint from $Y$ and connecting the good cuts. By Theorem \ref{TorTheo} of Torhorst, we obtain a simple closed curve $K^ {**}$ whose intersection with $\partial Y$ is finite, possibly empty. The proof is then similar as in the above subcase.
\end{proof}

%%%%%%%%%%%%%%%%%%%%%%%%%%%%%%%%%%%%%%%%%%%%%%%%%
%%%%%%%%%%%%%%%%%%%%%%%%%%%%%%%%%%%%%%%%%%%%%%%%%
%%%%%%%%%%%%%%%%%%%%%%%%%%%%%%%%%%%%%%%%%%%%%%%%%

\section{Details for the proof of Theorem~\ref{theo:lc->trivial}}\label{sec:DetailedProof}

This section is entirely devoted to the proof of Theorem~\ref{theo:lc->trivial}, sketched at the end of the preceding section.  Let  $n\geq 1$ and $X$ be a locally connected continuum with $n$ complementary components. We prove the result by induction on $n$. For $n=1$, the result holds thanks to Proposition~\ref{prop:lc->trivial_base}. Let $n\geq 2$ and assume that the result holds for every locally connected continuum having at most $n-1$ complementary components.

Let $X\subseteq\mathbb{C}$ be a locally connected continuum such that  $\#\pi_0(\bbC \setminus X)=n\geq 2$. Let $U_1, \ldots, U_n$ denote the components of $\bbC \setminus X$. By Corollary~\ref{interior-E}, we just need to show that every two points  $x\ne y\in\partial X=\bigcup_{j=1}^n\partial U_j$ can be separated from $x$ by a good cut of $X$.

\subsection{Case 1: $\partial U_i \ \cap \ \partial U_j = \varnothing$ for all $1\le i< j\le n$.}

Fix a point $a_i\in U_i$ for $i=1,2$ and a small enough number $\delta>0$ such that the circle $C_i=\{z: |z-a_i|=\delta\}$ is contained in $U_i$. Let $D_i$ be the bounded component of $\mathbb{C}\setminus C_i$. Choose two simple closed curves $J_1$ and $J_2$ as indicated in Figure \ref{fig:annulus}.
\begin{figure}[H]
\centering
\myvcenter{\includegraphics{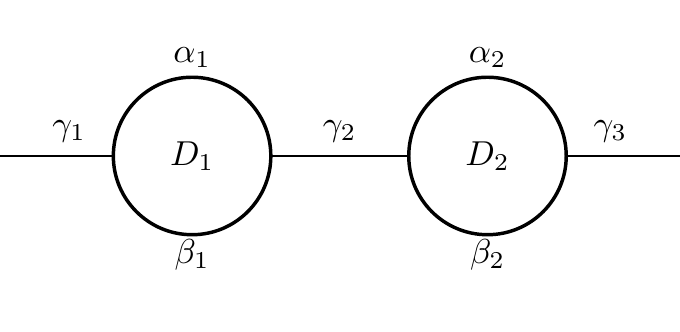}}
\caption{$J_1=\gamma_1\cup\alpha_1\cup\gamma_2\cup\alpha_2\cup\gamma_3\cup\{\infty\}$, $J_2=\gamma_1\cup\beta_1\cup\gamma_2\cup\beta_2\cup\gamma_3\cup\{\infty\}$.}
\label{fig:annulus}
\end{figure}
Let $\bbS^1=\{z\in\bbC: |z|=1\}$ denote the unit circle on $\bbC$. Denote by $W_1$ and $W_2$  the two components of $\left(\bbS^1\setminus\{\pm1\}\right)\times(0,1)$. Then $\partial W_1$ and $\partial W_2$ are simple closed curves that share the two disjoint arcs $\{\pm1\}\times[0,1]$. Therefore we may find  homeomorphisms $f_i$ for $i=1,2$ sending $J_i$ onto $\partial W_i$ such that $f_1(x)=f_2(x)$ for each $x$ in $\gamma_1\cup\gamma_2\cup\gamma_3\cup\{\infty\}$.
By Sch\"onflies theorem we may find a homeomorphism $h$ from $\hat{\bbC}$ onto the topological sphere
\[S^2=\left(\bbD^1\times\{0\}\right)\cup\left(\bbD^1\times\{1\}\right) \cup\left(\bbS^1\times[0,1]\right),\]
such that $h(x)=f_i(x)$ for $x\in J_i$. Here $\bbD^1=\{z\in\bbC: |z|\le 1\}$ is the unit disk on $\bbC$. Clearly, we have $h(C_i)=\bbS^1\times\{i-1\}$ for $i=1,2$ and $h\left(\hat{\bbC}\setminus(D_1\cup D_2)\right)=\bbS^1\times[0,1]$.

Consider $S^2$ as a subset of $\bbC\times\bbR$. Let $\Delta=\min\left\{ \textrm{dist}\left(h\left(\partial U_i\right), h\left(\partial U_j\right)\right): 1\le i< j\le n\right\}$, where $\textrm{dist}(A,B)=\inf\{\|x-y\|: x\in A, y\in B\}$ gives the distance between subsets $A, B$ of $\bbC\times\bbR$ under Euclidean metric $\|\cdot\|$. Choose a large enough number $L$ with $\sqrt{\left|e^{2\pi{\bf i}/L}-1\right|^2+\frac{1}{L^2}}<\frac{\Delta}{2}$.
Then $\mathcal{T}=\{\bbD^1\times\{0\}\}\cup\{\bbD^1\times\{1\}\}\cup\left\{T_{j,k}: 1\le j,k\le L\right\}$ is a tiling of $S^2$, where
\[T_{j,k}=\left\{e^{2t\pi{\bf i}/L}: \frac{j-1}{L}+\frac{1+(-1)^k}{2}\le t\le\frac{j}{L}+\frac{1+(-1)^k}{2}\right\}\times\left[\frac{k-1}{L},\frac{k}{L}\right].\]
Please see Figure \ref{fig:brickwall} for a depiction of $T_{j,k}$ with $L=6$.
\begin{figure}[H]
\centering
\myvcenter{\includegraphics{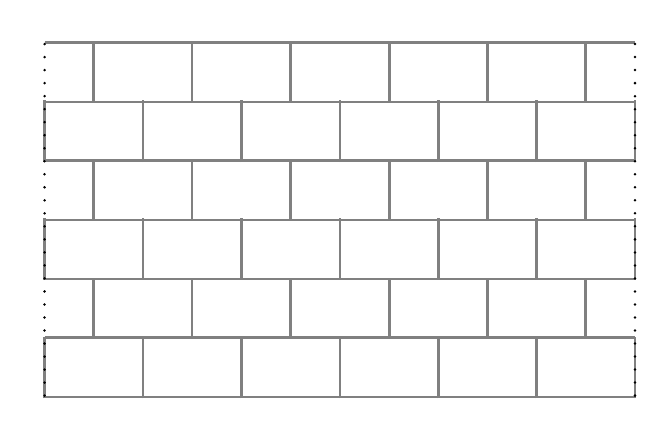}}
\caption{The dotted lines indicate how to obtain $\bbS^1\times[0,1]$ by identification.}
\label{fig:brickwall}
\end{figure}

Let $P_i$ be the union of all the tiles in $\mathcal{T}$ that intersect $h\left(\overline{U_i}\right)$ for $1\le i\le n$.  Since every tile of $\mathcal{T}$ is either a rectangle or a closed disk, and since the intersection of two tiles in $\mathcal{T}$ is either empty or a non-degenerate interval, we can make the following observations.
\begin{itemize}
\item[1.] $P_i \;(1\le i\le n)$ are pairwise disjoint locally connected continua without cut points.
\item[2.] For all $1\leq i\ne j\leq n$, $P_j$ is contained in a single component of $S^2\setminus P_i$.
\end{itemize}
For $1\le i\le n$, denote by $W_{i,t}\; ( 1\le t\le l_i)$ the components of $S^2\setminus P_i$. Then, by~Theorem~\ref{TorTheo}, the boundary of each $W_{i,t}$ is a simple closed curve contained in $h(X^o)$. Consequently, the sets $\overline{W_{i,t}} \;(1\le t\le l_i)$ are pairwise disjoint topological disks satisfying $\partial W_{i,t}\subset h(X^o)$.

If $x\in \partial U_i$ and $y\in\partial U_j$ for $i\ne j$, then $h(y)\in P_j$ belongs to a component $W_{i,t}$ of $S^2\setminus P_i$ and hence $h^{-1}(\partial W_{i,t})$ is a simple closed curve in $X^o$ that separates $x$ and $y$. By Proposition~\ref{interior-J}, we can infer that $x$ and $y$ are separated by a good cut of $X$.

If $\{x,y\}\subset\partial U_i$ for some $i$ then $\{h(x), h(y)\}\subset h(\partial U_i)\subset P_i^o$. Choose
$1\leq t_0\leq l_i$ such that $W_{i,t_0}$ contains at least one $P_j$ for some $j\ne i$. By the above observation 2., we have the following partition:
$$\displaystyle\{1,2,\ldots,n\}=\underbrace{\{1\leq k\leq n;h(\overline{U_k})\cap W_{i,t_0}=\emptyset\}}_{\displaystyle=:K}\;\cup\;
\underbrace{\{1\leq k\leq n;h(\overline{U_k})\subset W_{i,t_0}\}}_{\displaystyle =K^c\ne\emptyset}.
$$
Clearly, $i\in K$. Let $q:S^2\to S^2$ be an onto and continuous mapping sending $\overline{W_{i,t_0}}$ to a point and injective otherwise, \emph{i.e.},
$$
\begin{array}{c}
\exists z\in S^2, q(\overline{W_{i,t_0}})=\{z\},\\
q \textrm{ is injective on }S^2\setminus\overline{W_{i,t_0}}
\end{array}
$$
(see also~\cite[Theorem 1]{Brown}).  Then $z$ is in the interior of $q(h(X))$ and $q$ induces a homeomorphism from $S^2\setminus\overline{W_{i,t_0}} $ onto $S^2\setminus\{z\}$. It follows that $q(h(X))$ is a locally connected continuum, and that
$$q(h(X))^c=S^2\setminus q(h(X)) = \bigcup_{k\in K}q(h(U_k)) 
$$
is a disjoint union containing $q(h(U_i))$. Indeed, $q(h(\overline{U_k}))=\{z\}\subset q(h(X))$ for all $k\notin K$. Therefore, $q(h(X))$ has at most $n-1$ complementary components. By induction hypothesis, all the fibers of $q(h(X))$ are trivial. In particular, there is a good cut $\Gamma$ of $q(h(X))$ separating $q(h(x))$ and $q(h(y))$. We may choose $\Gamma$ in a way that it avoids the point $z$, which lies in the interior of $q(h(X))$.
Recall that $h: \hat{\bbC}\rightarrow S^2$ is a homeomorphism and that $\left.q\right|_ {S^2\setminus\overline{W_{i,t_0}}}$ is a homeomorphism from $S^2\setminus\overline{W_{i,t_0}} $ onto $S^2\setminus\{z\}$.
Therefore, $q^{-1}(\Gamma)$ is a good cut of $h(X)$ separating $h(x)$ and $h(y)$, hence that $h^{-1}(q^{-1}(\Gamma))$ is a good cut of $X$ separating $x$ and $y$ (see Proposition~\ref{Rmk:Invariance}).

\subsection{Case 2: $\partial U_i \ \cap \ \partial U_j \neq \varnothing$ for some $i \neq j$.}

Fix a point $x_0\in\partial U_i\cap \partial U_j$. By the Riemann mapping theorem, there are two conformal mappings
$$\phi_i:\mathbb{D}\to U_i,\;\;\phi_j:\mathbb{D}\to U_j,
$$
where $\mathbb{D}:=\{z\in\mathbb{C}:|z|<1\}$. Since $X$ is locally connected, $\phi_i$ and $\phi_j$ extend continuously to the closure $\mathbb{D}^1$ of $\mathbb{D}$ (see Theorem~\ref{CaraTheo} of Caratheodory). We may assume that $\phi_i(1)=\phi_j(1)=x_0$. Let
$$\Gamma=\{z\in\mathbb{C}:|z-1/2|=1/2\},\; J_1 =\phi_i(\Gamma),\;J_2=\phi_j(\Gamma).
$$
Then $J_1\cap J_2=\{x_0\}$, $J_1\setminus\{x_0\}\subset U_i$ and $J_2\setminus\{x_0\}\subset U_j$, as indicated in Figure~\ref{fig:touchjor}.

\begin{figure}[H]
\centering
\myvcenter{\includegraphics{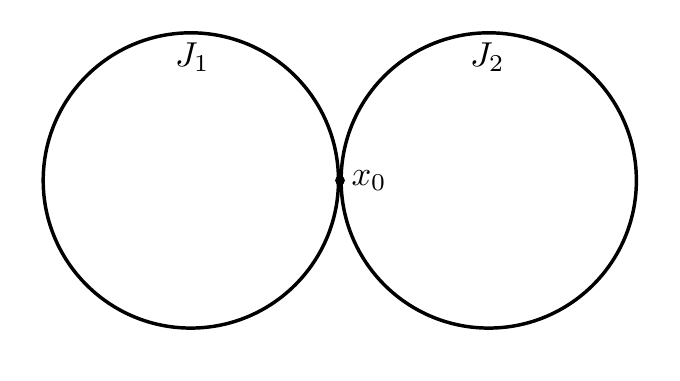}}
\caption{For the sake of convenience, we represent $J_1,J_2$ as two circles.}
\label{fig:touchjor}
\end{figure}

Let $\Phi$ be a conformal mapping from $\{z\in\mathbb{C}:|z|>1\}$ onto the unbounded component of $\mathbb{C}\setminus (J_1\cup J_2)$. As $J_1\cup J_2$ is a locally connected continuum, $\Phi$ can be continuously extended to $\{z\in\mathbb{C}:|z|\geq 1\}$ (Theorem~\ref{CaraTheo} of Caratheodory). Therefore, one can find two points $z_1,z_2$ on the unit circle satisfying $\Phi(z_1)\in U_i$ and $\Phi(z_2)\in U_j$. Put $R_i:=\{t z_i:t\geq1\} \;\;(i=1,2)$.
Then
$$\{\infty\}\cup \Phi (R_1\cup R_2 )\cup J_1\cup J_2
$$
gives rise to a partition of $\hat{\bbC}$ into four Jordan domains, \emph{i.e.}, where each domain is bounded by a simple closed curve. See the left part of Figure~\ref{fig:h-mapping}.

\begin{figure}[H]
\centering
\myvcenter{\includegraphics{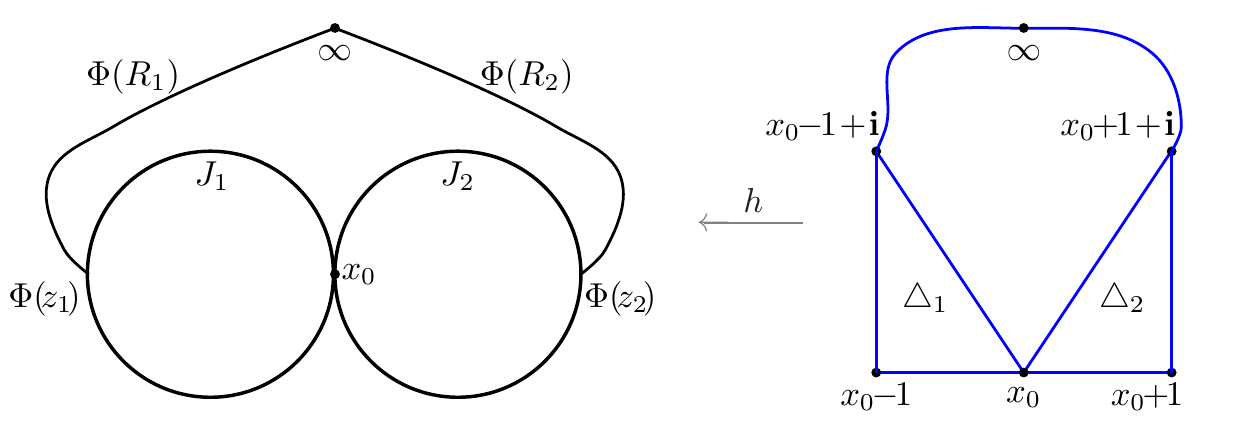}}
\caption{Mapping $h$ (we also marked $\infty$ in the figure).}
\label{fig:h-mapping}
\end{figure}

By Sch\"onflies Theorem, we can choose a homeomorphism $h:\hat\bbC\to \hat\bbC$ such that
$$h(\infty)=\infty,\;h(x_0)=x_0,\;h(\partial\Delta_i)=J_i\;\;(i=1,2).
$$
Here, $\Delta_1$ is the triangle with vertices $x_0,x_0-1$ and $x_0-1+ {\bf i}$   and $\triangle_2$ the triangle with vertices $x_0,x_0+1$ and $x_0+1+{\bf i}$. See Figure \ref{fig:h-mapping}. 

Let $W_1$ be the half plane to the left of the vertical line $L_1$ through $x_0-1$ and $W_2$ the one to the right of the vertical line $L_2$ through $x_0+1$. Let $W_3$ be the half plane below the horizontal line through $x_0$. See Figure~\ref{fig:g-mapping} for relative locations of $x_0, L_1, L_2$.
\begin{figure}[H]
\centering
\myvcenter{\includegraphics{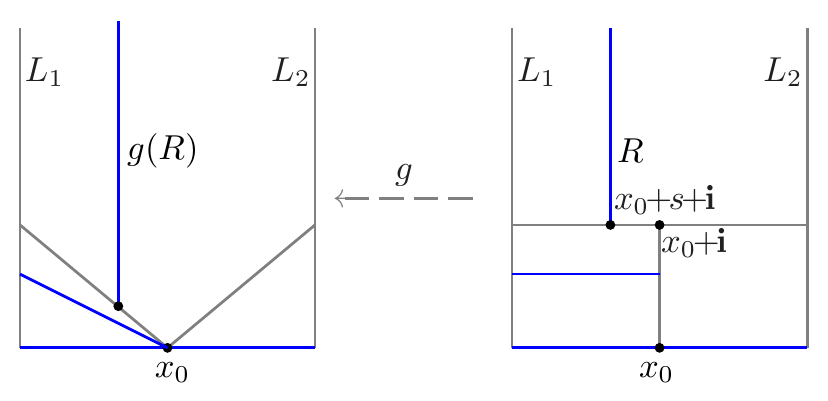}}
\caption{Mapping $g$.}
\label{fig:g-mapping}
\end{figure}
Then, we further define a continuous onto mapping $g:\hat\bbC\rightarrow\hat\bbC$ as follows (see also Figure~\ref{fig:g-mapping}):
\begin{itemize}
\item[(1)] $g(z):=z$ for $z$ in $\overline{W_1\cup W_2\cup W_3}$;
\item[(2)] $g(x_0+t{\bf i}):=x_0$ for $t\in(0,1]$, and $g$ is linearly extended to each horizontal segment between a point $x_0+t{\bf i}$ and a point $x_0\pm1+t{\bf i}\in (a\cup b)$ for $t\in(0,1]$;
\item[(3)]  $g\left(x_0+s+\left(1+t\right){\bf i}\right):=g\left(x_0+s+{\bf i}\right)+t{\bf i}$ for $s\in(-1,1)$ and $t\ge0$. See Figure \ref{fig:g-mapping} for relative locations of a vertical ray $R$ and its image $g(R)$.
\item[(4)] $g(\infty)=\infty$.
\end{itemize}

Consequently, $q=h\!\circ\! g: \hat\bbC\rightarrow\hat\bbC$ is a continuous onto mapping with
$$q^{-1}(x_0)=\{x_0+t{\bf i}: 0\le t\le1\},
$$ whose restriction $\left.q\right|_{\bbC\setminus\{x_0+t{\bf i}:\ 0\le t\le1\}}$ is a homeomorphism onto $\bbC\setminus\{x_0\}$. Let us write
  $$Y:=q^{-1}(X)\setminus\{x_0+t{\bf i}: 0<t<1\}.
  $$

Then we have
$$Y =\overline{q^{-1}(X\setminus\{x_0\})}$$
 and $Y$ a locally connected continuum. We refer to the Appendix in Section~\ref{sec:Appendix} for a complete proof of this assertion.

Finally, $Y$ has at most $n-1$ complementary components because \[q^{-1}(U_i)\cup q^{-1}(U_j)\cup\left\{x_0+s+\frac{1}{2}{\bf i}:\ -1\le s\le 1\right\}\] is a connected set disjoint from $Y$.

\subsubsection{Subcase 2.1: $x_0 \notin\{ x, y\}$.}

The induction hypothesis implies that the pseudo fiber of $Y$ at $q^{-1}(x)$ consists of a single point $q^{-1}(x)$. So, we may choose a good cut $\gamma$ of $Y$ separating $q^{-1}(x)$ from $q^{-1}(y)$. By our choice of $q$ and the definition of ``good cut'', we know that $q(\gamma)$ is a locally connected continuum and that $q(\gamma) \cap \partial X$ is a finite set (possibly empty).

Let $U_x$ denote the component of $\hat{\bbC}\setminus q(\gamma)$ containing $x$ and $U_y$ the one containing $y$. Clearly, $(\partial U_x\cup\partial U_y)\subset q(\gamma)$.
Since the restriction of $q$ to $\bbC\setminus\{x_0+t{\bf i}:\ 0\le t\le1\}$ is a homeomorphism onto $\bbC\setminus\{x_0\}$, we have $U_x\cap U_y=\emptyset$. Otherwise, we would have $U_x=U_y$, implying the existence of an arc $\alpha\subset U_x$ with end points $x$ and $y$. Then $q^{-1}(\alpha)$, a continuum in $\bbC\setminus\gamma$, would contain both $q^{-1}(x)$ and $q^{-1}(y)$, contradicting the fact that $\gamma$ is a good cut of $Y$ separating $q^{-1}(x)$ and $q^{-1}(y)$.

Since $q(\gamma)$ is locally connected, we infer that $\overline{U_x}\subset\hat{\bbC}$ is a locally connected continuum with no cut points. By Torhorst theorem (Theorem~\ref{TorTheo}), the component of $\hat{\bbC}\setminus\overline{U_x}$ containing $y$ is bounded by a simple closed curve $J$. As $J\subset\partial U_x\subset q(\gamma)$, the intersection $J\cap\partial X$ is a finite set (possibly empty).

If $J\setminus X\ne\emptyset$ then $J$ is already a good cut of $X$ separating $x$ and $y$.

If $J\subset X$, the component $W$ of $\hat{\bbC}\setminus J$ containing $y$  intersects and hence contains a component of $\hat{\bbC}\setminus X$. In fact, if $V$ denotes the other component of $\hat{\bbC}\setminus J$, then every complementary component of $X$ is entirely contained either in $V$ or in $W$. Let us choose a continuous onto map $\varphi:\hat{\bbC}\rightarrow\hat{\bbC}$ sending $J\cup W$ to a single point $z$ and injective otherwise~\cite{Brown}. Then, $\varphi(X)$ is a locally connected continuum whose complement has at most $n-1$ components. Indeed,
$$\#\pi_0(\bbC\setminus\varphi(X))=\#\left\{P\in\pi_0(\bbC\setminus X):P\subset V\right\}\leq n-1,
$$
since at least one complementary component of $X$ is contained in $W$.

Consequently, by induction hypothesis,
there exists a good cut $J^*$ of $\varphi(X)$ separating $\varphi(x)$ and $\varphi(y)=z$. As $\left.\varphi\right|_{\hat{\bbC}\setminus(J\cup W)}$ is a homeomorphism from $\hat{\bbC}\setminus(J\cup W)$ onto $\hat{\bbC}\setminus\{\varphi(y)\}$ and $J^*\cap\{\varphi(x), \varphi(y)\}=\emptyset$, we know that $\varphi^{-1}(J^*)$ is a good cut of $X$ separating $x$ from $y$.

\subsubsection{Subcase 2.2: $x_0 = x$ or $x_0=y$.}

We only consider the case $x_0=x$.

By induction hypothesis, the pseudo-fibers of $Y$ at $x_0$ and $x_0+{\bf i}$ both consist of a single point. So, we may choose good cuts $K_1$ and $K_2$ of $Y$ that respectively separates $x_0$ and $x_0+{\bf i}$ from $q^{-1}(y)$. Denote by $V_1$ the component of $\hat{\bbC}\setminus K_1$ containing $x_0$ and by $V_2$ the component of $\hat{\bbC}\setminus K_2$ containing $x_0+{\bf i}$.

Let $y_1\in\{x_0+t{\bf i}: 0<t<1\}$ be the first point from $x_0$ to $x_0+{\bf i}$ such that $y_1\in K_1$; let  $y_2\in\{x_0+t{\bf i}: 0<t<1\}$ be the first point from $x_0+{\bf i}$ to $x_0$ such that $y_2\in K_2$. Then the segment $\beta$ between $y_1$ and $y_2$ is disjoint from $Y$. Therefore, we can choose a simple closed curve $K_3$ disjoint from $Y$ such that the bounded component of $\hat{\bbC}\setminus K_3$, denoted as $V_3$, contains $\beta$ and satisfies $V_3\cap Y=\emptyset$.

Consequently, $U=V_1\cup V_2\cup V_3$ is a connected open set whose closure in $\hat{\bbC}$ is the union of three topological disks. Clearly, $\overline{U}$ is a locally connected continuum with no cut points, and $\partial U$ is a subset of $K_1\cup K_2\cup K_3$.

Since  $q^{-1}(y)\notin\overline{U}$, by Torhorst theorem (Theorem~\ref{TorTheo}), the component of $\hat{\bbC}\setminus\overline{U}$ containing $q^{-1}(y)$ is bounded by a simple closed curve $K^{*}$, which is necessarily a subset of $\partial U\subset(K_1\cup K_2\cup K_3)$ and hence intersects $\partial Y$ at most at finitely many points. Moreover, $K^{*}$ is disjoint from $\{x_0,x_0+{\bf i}\}$.

If $K^{*}\setminus Y\ne\emptyset$ then $q(K^{*})$ is already a good cut of $X$ that separates $x$ from $y$.

Otherwise, $K^{*}\subset (Y\setminus\{x_0,x_0+{\bf i}\})$ and hence $q(K^{*})$ is a simple closed curve contained in $X$ such that $x$ and $y$ belong to distinct component of $\hat{\bbC}\setminus q(K^{*})$.

In the latter case, let $V_x$ be the component of $\hat{\bbC}\setminus q(K^{*})$ containing $x$. Let $\psi: \hat{\bbC}\rightarrow\hat{\bbC}$  be a continuous onto mapping sending $q(K^{*})\cup V_x$ to a single point and injective otherwise~\cite{Brown}. Since $x_0=x\in\partial U_i\cap \partial U_j$, $V_x$ contains the connected set $U_i\cup \{x_0\}\cup U_j$ hence $\bbC\setminus\psi(X)$ has at most $n-1$ components. By induction hypothesis, there exists a good cut $\eta$ of $\psi(X)$ separating $\psi(x)$ and $\psi(y)$. As $\left.\psi\right|_{\hat{\bbC}\setminus(K^{*}\cup V_x)}$ is a homeomorphism from $\hat{\bbC}\setminus(K^{*}\cup V_x)$ onto $\hat{\bbC}\setminus\{\psi(x)\}$ and $\eta\cap\{\psi(x), \psi(y)\}=\emptyset$, we know that $\psi^{-1}(\eta)$ is a good cut of $X$ separating $x$ from $y$.

%%%

%%%%%%%%%%%%%%%%%%%%%%%%%%%%%%%%%%%%%%%%%%%%%%%%%
%%%%%%%%%%%%%%%%%%%%%%%%%%%%%%%%%%%%%%%%%%%%%%%%%
%%%%%%%%%%%%%%%%%%%%%%%%%%%%%%%%%%%%%%%%%%%%%%%%%
\section{Further questions}\label{sec:questions}

This section poses some questions concerning the estimate of $\ell(X)$ for particular continua $X$ on the plane. The beginning questions ask about continua $X$ with specific properties.

\begin{ques}
For every integer $k$, find in the literature of continuum theory a (path-connected) continuum $X$ such that $\ell(X) = k$.
\end{ques}

\begin{ques}
Construct a continuum $X$ which does not contain any indecomposable subcontinuum but for which $\ell(X) = \infty$.
\end{ques}

\begin{ques}
Does there exist a Julia set $J$ such that $\ell(J) = k$ for some integer $k>0$ ? (Note: some Julia sets are indecomposable and hence do not verify this.)
\end{ques}

The famous MLC conjecture says that {\em the Mandelbrot set ${\mathcal M}$ is locally connected}. Equivalently, $\ell({\mathcal M})=0$. There have been many authors obtaining local connectedness of ${\mathcal M}$ at more and more points, while the conjecture remains open. Here, we may use our scale to pose ``weaker version(s)'' of MLC as follows.
\begin{ques}
Is $\ell({\mathcal M}) \leq k$ for some integer $k$ ? ($k$-MLC)
\end{ques}

In particular, we want to verify whether $\ell({\mathcal M})\le1$. Of course, the first step toward this direction shall be discussions on the structure of fibers of ${\mathcal M}$, followed by the search of concrete condition(s) which imply that $\ell({\mathcal M})\le 1$.

Another question we are interested in is:

\begin{ques}
What are the links between $1$-MLC and the stability conjecture \cite[p.3, Conjecture 1.2]{McM-94} ?
\end{ques}

%%%

%%%%%%%%%%%%%%%%%%%%%%%%%%%%%%%%%%%%%%%%%%%%%%%%%
%%%%%%%%%%%%%%%%%%%%%%%%%%%%%%%%%%%%%%%%%%%%%%%%%
%%%%%%%%%%%%%%%%%%%%%%%%%%%%%%%%%%%%%%%%%%%%%%%%%

\section{Appendix}\label{sec:Appendix}
In this appendix, we recall some definitions and obtain basic results of purely topological nature, used in Section~\ref{sec:DetailedProof}.

\begin{defi} We say that a topological $X$ is \emph{locally connected at} $x$ if for every open set $U$ containing $x$ there exists a connected, open set $V$ with $x\in V\subset U$. The space $X$ is said to be \emph{locally connected} if it is locally connected at $x$ for all $x \in X$.
\end{defi}

\begin{defi} \label{def:qc}For a point $x$ of a topological space $X$, the \emph{quasicomponent} of $x$ is the set of points $y\in X$ such that there exists no separation $X=A\cup B$  into two disjoint open sets $A,B$ such that $x\in X$ and $y\in Y$ (see~\cite[Section 46, Chapter V]{Kur68}). 
\end{defi}
\begin{rema}\label{rem:qc}In a compact space, the quasicomponents are connected and coincide therefore with  the components (see~\cite[Section 47, Chapter II, Theorem 2]{Kur68}). \\
\end{rema}
In the rest of the appendix, we fix a point $x_0\in\hat{\bbC}$ and a continuous onto mapping  $g:\hat{\bbC}\to\hat{\bbC}$ such that $g^{-1}(x_0)$ is the vertical line segment $L$ between $x_0$ and $x_0+{\bf i}$, while $g^{-1}(x)$ a single point set for each $x\ne x_0$. We denote by $L^o$ the interior of $L$. 
\begin{prop}\label{A1}
If $X$ is a continuum, so is $g^{-1}(X)$.
\end{prop}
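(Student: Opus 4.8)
The plan is to prove that $g^{-1}(X)$ is both compact and connected. Compactness is the easy half: since $g:\hat{\bbC}\to\hat{\bbC}$ is continuous and $X$ is closed (a continuum is compact), the preimage $g^{-1}(X)$ is a closed subset of the compact space $\hat{\bbC}$, hence compact. So the real content is connectedness.

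For connectedness I would argue by contradiction, exploiting the special structure of $g$: it collapses the segment $L=g^{-1}(x_0)$ to the point $x_0$ and is injective elsewhere. Suppose $g^{-1}(X)=A\cup B$ with $A,B$ nonempty, disjoint, and closed in $g^{-1}(X)$ (equivalently closed in $\hat{\bbC}$, since $g^{-1}(X)$ is closed). The key observation is that the collapsed segment $L$ is itself connected, so it cannot be split: if $x_0\in X$, then $L=g^{-1}(x_0)\subseteq g^{-1}(X)$ is a connected subset, and therefore $L$ lies entirely in $A$ or entirely in $B$; say $L\subseteq A$. (If $x_0\notin X$, then $g$ is injective on all of $g^{-1}(X)$ and $g$ restricted there is a homeomorphism onto $X$, so connectedness of $X$ transfers immediately — this is the trivial case.)

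The heart of the argument is then to push the separation down through $g$ to produce a separation of $X$ itself, contradicting that $X$ is a continuum. Since $g$ is injective off $L$ and $L\subseteq A$, the image $g(B)$ is disjoint from $x_0$, and I would show $X=g(A)\cup g(B)$ with $g(A)\cap g(B)=\{x_0\}$ at worst. Because $A$ and $B$ are compact, their images $g(A),g(B)$ are compact (hence closed) subsets of $X$. The plan is to verify that $\{g(A)\setminus\{x_0\}, g(B)\}$ — or a suitable clopen repackaging — gives a genuine separation of the connected set $X$: every point of $X\setminus\{x_0\}$ has a unique preimage lying in exactly one of $A,B$, so $g(A)\setminus\{x_0\}$ and $g(B)\setminus\{x_0\}$ are disjoint, and since $g$ is a closed map (continuous map from a compact space to a Hausdorff space) these pieces are relatively closed and, by exhaustion, also relatively open in $X$. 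Assembling $x_0$ with the $A$-side then exhibits $X$ as a disjoint union of two nonempty closed sets, the contradiction.

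The main obstacle I anticipate is the bookkeeping at the single collapse point $x_0$: because $L$ is two-sided, points of $X$ arbitrarily close to $x_0$ may have preimages approaching $L$ from either end, so I must be careful that gluing $x_0$ to the $A$-side does not secretly reconnect $X$ through limits. Concretely, the delicate step is confirming that $g(B)$ stays a \emph{closed} set missing $x_0$ and that $g(A)$ is closed — this is where I would lean on the fact that $g$ is a closed map and that $B$, being closed and disjoint from the closed segment $L$, has $g(B)$ bounded away from $x_0$. Once closedness of both image pieces is secured, the separation of $X$ follows and contradicts connectedness, completing the proof.
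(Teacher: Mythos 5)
Your proof is correct and takes essentially the same route as the paper's: compactness from continuity, then pushing a hypothetical separation $g^{-1}(X)=A\cup B$ down through $g$, using connectedness of each fiber (the segment $L$ or a singleton) to force it entirely into $A$ or $B$, so that $g(A)$ and $g(B)$ are disjoint nonempty compact sets covering $X$, contradicting connectedness. The only cosmetic difference is that your hedging about $g(A)\cap g(B)=\{x_0\}$ and the ``clopen repackaging'' is unnecessary: once $L\subseteq A$, no point of $B$ maps to $x_0$, so $g(A)$ and $g(B)$ are disjoint outright and the contradiction is immediate.
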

\begin{proof}
As $g$ is a continuous surjection, we know that $g^{-1}(X)$ is a compact set whose image under $g$ is exactly $X$. Suppose that $g^{-1}(X)$ is disconnected, we may fix a separation $g^{-1}(X)=A\cup B$, where $A$ and $B$ are disjoint compact nonempty sets in $\hat{\bbC}$. Then, for any $x\in X$, $g^{-1}(x)$ (a single point or the segment $L$) is contained either entirely in $A$ or entirely in $B$. It follows that $g(A)$ and $g(B)$ are disjoint nonempty sets in $\hat{\bbC}$. This forms a separation $X=g(A)\cup g(B)$, contradicting the connectedness of $X$.
\end{proof}

\begin{prop}\label{A2}
Let $X$ be a continuum with $x_0\in X$ and $\overline{g^{-1}(X)\setminus L}\;\cap \;L=\{x_0,x_0+{\bf i}\}$. Then 
$$g^{-1}(X\setminus \{x_0\})=g^{-1}(X)\setminus L^o=\overline{g^{-1}(X)\setminus L},
$$ 
and this compact set has at most two components.
\end{prop}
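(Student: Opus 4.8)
The plan is to prove the statement in two stages: first establish the chain of set equalities, then bound the number of components.

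For the equalities, I would work from the definition of $g$. Since $g^{-1}(x_0) = L$ and $g^{-1}(x)$ is a singleton for $x \neq x_0$, the preimage $g^{-1}(X \setminus \{x_0\})$ consists of exactly those points mapping into $X$ but not equal to $x_0$, which are precisely the points of $g^{-1}(X)$ lying outside $L$ together with any part of $L$ mapping to $X \setminus \{x_0\}$ — but no point of $L$ maps outside $x_0$, so $g^{-1}(X \setminus \{x_0\}) = g^{-1}(X) \setminus L$. Since the endpoints $x_0$ and $x_0 + {\bf i}$ of $L$ map to $x_0$ (by the normalization $g(x_0) = x_0$ and $g(x_0 + {\bf i}) = x_0$), and since $L^o$ is the open segment, I would check that removing $L$ versus removing $L^o$ differs only in the two endpoints; the hypothesis $\overline{g^{-1}(X) \setminus L} \cap L = \{x_0, x_0 + {\bf i}\}$ is exactly what guarantees that these two endpoints are recovered as limit points. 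Thus $g^{-1}(X) \setminus L^o$ contains the two endpoints while $g^{-1}(X) \setminus L$ omits them, and the closure $\overline{g^{-1}(X) \setminus L}$ adds back precisely those two points, giving the middle and right-hand equalities.

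For compactness, I would note that $g^{-1}(X)$ is compact by Proposition~\ref{A1}, and $\overline{g^{-1}(X) \setminus L}$ is closed inside a compact set, hence compact.

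The component count is the main obstacle and the genuinely interesting part. The idea is that $g$ collapses $L$ to the single point $x_0$, so on $g^{-1}(X) \setminus L^o$ the map identifies the two endpoints $x_0$ and $x_0 + {\bf i}$ to one point while being injective elsewhere. The strategy is to transfer connectedness back and forth: the image $g(g^{-1}(X) \setminus L^o) = X$ is connected, so the compact set $E := \overline{g^{-1}(X) \setminus L}$ cannot break into more than two pieces, because any separation $E = A \cup B$ into disjoint compact sets would, after applying $g$, yield sets $g(A)$ and $g(B)$ whose only possible overlap is the image point $x_0$ (arising from the two endpoints possibly landing in different pieces). Concretely, if the two endpoints $x_0, x_0+{\bf i}$ lie in the same piece, say $A$, then $g(A)$ and $g(B)$ are disjoint and nonempty, contradicting connectedness of $X$ exactly as in Proposition~\ref{A1}; hence the only way $E$ can disconnect is when the two endpoints are separated, giving at most two components. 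I expect the delicate point to be ruling out three or more components: I would argue that any component of $E$ must, via the quasicomponent-equals-component principle for compact spaces (Remark~\ref{rem:qc}), contain at least one endpoint of $L$ — since a component missing both endpoints would map homeomorphically to a proper clopen subset of $X$, contradicting connectedness of $X$ — and since there are only two endpoints, there can be at most two components.
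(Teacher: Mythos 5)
Your handling of the set equalities and of compactness is correct and matches the paper: the hypothesis $\overline{g^{-1}(X)\setminus L}\cap L=\{x_0,x_0+{\bf i}\}$ gives $\overline{g^{-1}(X)\setminus L}=g^{-1}(X)\setminus L^o$, and (as you rightly note) the leftmost term of the displayed chain is really $g^{-1}(X)\setminus L=g^{-1}(X\setminus\{x_0\})$, which agrees with the others only after taking closure --- this is how the identity is actually used later, where one writes $Y=\overline{q^{-1}(X\setminus\{x_0\})}$. Your overall strategy for the component count is also the paper's strategy: show that every separation of $E:=g^{-1}(X)\setminus L^o$ must place $x_0$ and $x_0+{\bf i}$ in different pieces, and that every component of $E$ contains one of these two points, using Remark~\ref{rem:qc}.

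There is, however, a genuine flaw in the justification of the second step. You assert that a component $Q$ of $E$ missing both endpoints ``would map homeomorphically to a proper clopen subset of $X$''. Injectivity of $g$ on $Q$ is fine, but there is no reason for $g(Q)$ to be \emph{open} in $X$: components of a compact set need not be open (think of a Cantor set), and since $g$ is injective off the endpoints, openness of $g(Q)$ in $X$ is equivalent to openness of $Q$ in $E$, which is exactly what cannot be assumed. So, as written, no contradiction materializes. The repair is to apply the quasicomponent principle you cite to manufacture a clopen set rather than to use the component itself: since $Q$ equals the quasicomponent of its points, there is a clopen $C_1\supset Q$ with $x_0\notin C_1$ and a clopen $C_2\supset Q$ with $x_0+{\bf i}\notin C_2$; then $C:=C_1\cap C_2$ is a nonempty clopen subset of $E$ avoiding both endpoints. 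Now your own first-half analysis applies to the separation $E=C\cup(E\setminus C)$: both endpoints lie in $E\setminus C$, hence $g(C)$ and $g(E\setminus C)$ are disjoint, nonempty, compact, and cover $X$, contradicting connectedness of $X$. This intersection of two clopen separating sets is precisely what the paper's proof does (its two separations $R_1\cup R_2$ and $R_3\cup R_4$, combined into a separation of $X$ by the images of $R_1\cup R_3$ and of $R_2\cap R_4$). The gap is therefore localized and fixable with ingredients you already set up, but the step as you stated it would fail.
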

\begin{proof} The first equality is trivial.  Moreover, the assumption $\overline{g^{-1}(X)\setminus L}\;\cap \;L=\{x_0,x_0+{\bf i}\}$ implies that $\overline{g^{-1}(X)\setminus L}\;\cap\; L^o=\emptyset$, while $\overline{g^{-1}(X)\setminus L}\;\cup\; L^o=g^{-1}(X)$. This proves the second equality.

By Proposition~\ref{A1}, $g^{-1}(X)$ is a continuum, hence $\overline{g^{-1}(X)\setminus L}$ is a compact set. 

We finally show that any connected component of $\overline{g^{-1}(X)\setminus L}=g^{-1}(X)\setminus L^o$ contains at least $x_0$ or $x_0+{\bf i}$. Otherwise, there exists a component $Q$ of $g^{-1}(X)\setminus L^o$ with $Q\cap L=\emptyset$. As $g^{-1}(X)\setminus L^o$ is a compact set,  by Remark~\ref{rem:qc} we may choose two separations, $g^{-1}(X)\setminus L^o=R_1\cup R_2$ and $g^{-1}(X)\setminus L^o=R_3\cup R_4$, such that $x_0\in R_1$, $x_0+{\bf i}\in R_3$, and $Q\subset R_2\cap R_4$. Since $A=g(R_1\cup R_2)$ and $B=g(R_3\cap R_4)$ are disjoint nonempty compact sets with $A\cup B=X$, we obtain a contradiction to the connectedness of $X$.
\end{proof}

\begin{rema}\label{rem:diskcover} The equality of the above proposition means that $L$ is an isolated line inside $g^{-1}(X)$: for every $x_0+t{\bf i}\in L^o$ with $0<t<1$, there exists $\epsilon>0$ such that 
$$B(x_0+t{\bf i},\epsilon)\cap g^{-1}(X)=B(x_0+t{\bf i},\epsilon)\cap L^o,
$$ 
$B(x,\epsilon)$ is the disk centered at $x$ with radius $\epsilon$. 
\end{rema}

\begin{prop}\label{A3}
Let $X$ be a locally connected continuum with $x_0\in X$ and $\overline{g^{-1}(X)\setminus L}\cap L=\{x_0,x_0+{\bf i}\}$. Then $g^{-1}(X)$ is also a locally connected continuum.
\end{prop}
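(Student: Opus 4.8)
The plan is to prove local connectedness of $g^{-1}(X)$ by checking the property at each point and distinguishing those points according to their location relative to the segment $L$. Concretely, I would split $g^{-1}(X)$ into three kinds of points: points of $L^o$, the two endpoints $\{x_0, x_0+{\bf i}\}$, and points outside $L$. For a point $p \in g^{-1}(X)$ with $p \notin L$, the restriction of $g$ to a small neighborhood of $p$ is a homeomorphism onto a neighborhood of $g(p) \neq x_0$ in $\hat{\bbC}$; since $X$ is locally connected at $g(p)$, I can pull back small connected open neighborhoods of $g(p)$ in $X$ through this local homeomorphism to obtain connected open neighborhoods of $p$ in $g^{-1}(X)$. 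This is the routine case.

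For a point $x_0 + t{\bf i} \in L^o$ (with $0 < t < 1$), I would invoke Remark~\ref{rem:diskcover}: the segment $L$ is isolated inside $g^{-1}(X)$, so there is $\epsilon > 0$ with $B(x_0+t{\bf i}, \epsilon) \cap g^{-1}(X) = B(x_0+t{\bf i}, \epsilon) \cap L^o$. Hence a neighborhood of $x_0+t{\bf i}$ in $g^{-1}(X)$ is just an open subsegment of $L^o$, which is connected and open in $g^{-1}(X)$. Thus $g^{-1}(X)$ is trivially locally connected at every interior point of $L$.

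The main obstacle, and the heart of the proof, is local connectedness at the two endpoints $x_0$ and $x_0 + {\bf i}$, where the segment $L$ meets $\overline{g^{-1}(X) \setminus L}$. Here $g$ is genuinely non-injective, collapsing all of $L$ to $x_0$, so a small neighborhood of $x_0$ in $g^{-1}(X)$ consists of part of $L$ together with the pullback of a neighborhood of $x_0$ in $X$ under the collapsing map. My strategy would be to fix an open set $U \ni x_0$ in $g^{-1}(X)$ and produce a connected open $V$ with $x_0 \in V \subseteq U$. Using that $X$ is locally connected at $g(x_0) = x_0$, I would take a connected open neighborhood $N$ of $x_0$ in $X$ that is small enough that $g^{-1}(N) \cap g^{-1}(X)$ lands inside $U$ once we add the relevant part of $L$. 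The key point is that $g^{-1}(N)$ need not be connected a priori—it could split into the pieces lying on either ``side'' of the collapsed segment—but attaching the subsegment of $L$ near $x_0$ glues these pieces together at $x_0$, since $g$ maps all of $L$ to the single point $x_0 \in N$. I would argue that the resulting set is open in $g^{-1}(X)$ (using continuity of $g$ together with the isolatedness of $L^o$ to control the seam along $L$) and connected (because each component of the pullback that meets $U$ near $x_0$ is forced, by the structure established in Proposition~\ref{A2}, to abut $x_0$ or $x_0 + {\bf i}$, and near $x_0$ they are tied together through the segment).

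I would handle $x_0 + {\bf i}$ symmetrically, noting that $g(x_0 + {\bf i}) = x_0$ as well and that the same local analysis applies with the roles of the endpoints interchanged. Having verified local connectedness at every point of the three types, and knowing from Proposition~\ref{A1} that $g^{-1}(X)$ is already a continuum, I conclude that $g^{-1}(X)$ is a locally connected continuum. The delicate step worth spelling out carefully in the full proof is the openness of the glued neighborhood $V$ at the endpoints, since one must ensure that the pullback of the connected neighborhood in $X$ together with the segment piece genuinely contains an open neighborhood of $x_0$ in the subspace topology of $g^{-1}(X)$.
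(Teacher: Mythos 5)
Your treatment of the points of $g^{-1}(X)\setminus L$ and of the interior points of $L$ coincides with the paper's proof and is fine. The gap is at the endpoints, exactly the case you call the heart of the proof. The pivotal step --- ``take a connected open neighborhood $N$ of $x_0$ in $X$ small enough that $g^{-1}(N)\cap g^{-1}(X)$ lands inside $U$ once we add the relevant part of $L$'' --- cannot be carried out for any choice of $N$. Since $x_0\in N$ and $g^{-1}(x_0)=L$, the preimage $g^{-1}(N)$ always contains the whole segment $L$; moreover, because $N$ is a neighborhood of $x_0$ in $X$, the set $g^{-1}(N)$ is a neighborhood of $x_0+{\bf i}$ in $g^{-1}(X)$, and since $x_0+{\bf i}\in\overline{g^{-1}(X)\setminus L}$ it contains points of $g^{-1}(X)\setminus L$ accumulating at $x_0+{\bf i}$. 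So if $U$ is a small neighborhood of $x_0$ (say disjoint from a fixed ball around $x_0+{\bf i}$), then $g^{-1}(N)$ is contained neither in $U$ nor in $U\cup L$, no matter how small $N$ is. You also misdiagnose the difficulty: $g^{-1}(N)$ is not in danger of being disconnected --- the collapsed segment can only force connectedness, which is essentially the argument of Proposition~\ref{A1} --- the danger is that it is unavoidably too large, stretching along $L$ to the far endpoint.

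The missing idea, which is how the paper proceeds, is to \emph{cut} the pullback along $L^o$ rather than try to shrink it. The paper builds a dumbbell-shaped closed neighborhood $N_r$ of $L$: two $r$-disks $U_r,V_r$ around the endpoints joined by a thin tube $W_r$ around the middle of $L$, where $W_r$ can be chosen disjoint from $g^{-1}(X)\setminus L$ by Remark~\ref{rem:diskcover}. It then takes the component $P_r$ of $X\cap g(N_r)$ containing $x_0$ (a neighborhood of $x_0$ in $X$ by local connectedness), and applies Proposition~\ref{A2} to the subcontinuum $P_r$: the compact set $g^{-1}(P_r)\setminus L^o$ has at most two components, $Q_1\ni x_0$ and $Q_2\ni x_0+{\bf i}$, and the tube condition forces $Q_1\subset U_r$ and $Q_2\subset V_r$. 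Then $A_r=Q_1\cup\{x_0+t{\bf i}: 0\le t\le \frac{1}{2}r\}$ is a connected neighborhood of $x_0$ in $g^{-1}(X)$ of diameter comparable to $r$, which settles the endpoint case. Your closing appeal to Proposition~\ref{A2} does not repair the gap: that proposition describes the components of $g^{-1}(P)\setminus L^o$ for a subcontinuum $P$, not the components of $g^{-1}(N)\cap U$, and in your setup the small connected set it should produce has never been constructed. Finally, the ``delicate openness'' you flag at the end is a red herring: it suffices to produce connected (not necessarily open) neighborhoods of arbitrarily small diameter, since a compact metric space that is connected im kleinen at every point is locally connected; this is exactly the form of neighborhood the paper exhibits.
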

\begin{proof}
By Proposition \ref{A1}, we just need to check that $g^{-1}(X)$ is locally connected. By the assumption $\overline{g^{-1}(X)\setminus L}\cap L=\{x_0,x_0+{\bf i}\}$, $g^{-1}(X)$ is locally connected at every point on $L\setminus\{x_0,x_0+{\bf i}\}$ (see Remark~\ref{rem:diskcover}). Moreover,  $g$ restricted to $g^{-1}(X)\setminus L$ is a homeomorphism, hence $g^{-1}(X)$ is locally connected at every point of $g^{-1}(X)\setminus L$. It remains to show that $g^{-1}(X)$ is locally connected at $x_0$ and $x_0+{\bf i}$.

Let $r>0$. Let $U_r$ be the closed disk of radius $r$ centered at $x_0$, and $V_r$ the closed disk of radius $r$ centered at $x_0+{\bf i}$. Let $W_r$ be the  union of closed disks of radius $\delta_r$ centered at a point on $L$ between $x_0+\frac{r}{2}{\bf i}$ and $x_0+(1-\frac{r}{2}){\bf i}$, for some small enough $\delta_r$ such that $W_r$ is disjoint from $g^{-1}(X)\setminus L$ (see again Remark~\ref{rem:diskcover}).

Then $N_r:=U_r\cup V_r\cup W_r$ is a continuum whose interior contains $L$, and hence $g(N_r)$ is a continuum containing $x_0$ in its interior. By local connectedness of $X$, the component $P_r$ of $X\cap g(N_r)$ containing $x_0$ is a neighborhood of $x_0$ in $X$. Therefore, $g^{-1}(P_r)$ is a continuum by Proposition \ref{A1}; moreover, it is a neighborhood of both $x_0$ and $x_0+{\bf i}$ in $g^{-1}(X)$.

By  Proposition \ref{A2}, we further have that $g^{-1}(P_r)\setminus L^o$ is a compact set having at most two components. Let $Q_1$ be the component  containing $x_0$, and $Q_2$ the one containing $x_0+{\bf i}$. Then, $A_r:=Q_1\cup\{x_0+t{\bf i}: 0\le t\le\frac{1}{2}r\}$ and $B_r:=Q_2\cup\{x_0+t{\bf i}: 1-\frac{1}{2}r\le t\le1\}$ are connected neighborhoods of $x_0$ and $x_0+{\bf i}$ in $g^{-1}(X)$, respectively. Since  $r$ was chosen arbitrarily, this proves that $g^{-1}(X)$ is locally connected at $x_0$ and $x_0+{\bf i}$.
\end{proof}

\begin{prop}\label{A4}
Let $X$ be a locally connected continuum with $x_0\in X$ and $\overline{g^{-1}(X)\setminus L}\cap L=\{x_0,x_0+{\bf i}\}$. Then $g^{-1}(X)\setminus L^o$ is either a locally connected continuum or the union of two locally connected continua.
\end{prop}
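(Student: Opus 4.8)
The plan is to prove the statement by establishing the stronger fact that $M:=g^{-1}(X)\setminus L^o$ is locally connected at \emph{every} one of its points; the dichotomy asserted in the statement is then automatic. Indeed, by Proposition~\ref{A2} the compact set $M$ has at most two components, each of which contains $x_0$ or $x_0+{\bf i}$. If $M$ is connected it is a continuum, and once it is known to be locally connected it is a locally connected continuum. If instead $M$ has two components $Q_1'\ni x_0$ and $Q_2'\ni x_0+{\bf i}$, then each is clopen in $M$, hence compact and connected, and local connectedness of $M$ restricts to each open piece; thus $M$ is a union of two locally connected continua. Either way, everything reduces to local connectedness of $M$.

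First I would dispose of the points $p\in M\setminus\{x_0,x_0+{\bf i}\}$. Since $M\cap L=L\setminus L^o=\{x_0,x_0+{\bf i}\}$, such a $p$ does not lie on $L$, so a small ball $B(p,\rho)$ avoids the closed set $L\supseteq L^o$. On $B(p,\rho)$ the sets $M$ and $g^{-1}(X)$ coincide, and $g^{-1}(X)$ is locally connected at $p$ by Proposition~\ref{A3}; shrinking the witnessing connected open sets to lie inside $B(p,\rho)$ makes them subsets of $M$ that are open in $M$, which gives local connectedness of $M$ at $p$.

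The main obstacle is local connectedness at $x_0$ and $x_0+{\bf i}$; I treat $x_0$, the other point being symmetric. Here I would reuse the construction from the proof of Proposition~\ref{A3}: for small $r>0$ one forms $N_r=U_r\cup V_r\cup W_r$, the set $P_r$ (the component of $X\cap g(N_r)$ containing $x_0$), and $Q_1$, the component of $g^{-1}(P_r)\setminus L^o$ containing $x_0$. Two observations control $Q_1$. First, because $g$ is injective off $L$ and $L\subset N_r$, one has $g^{-1}(g(N_r))=N_r$, so $g^{-1}(P_r)\subseteq N_r$. Second, $W_r$ is chosen disjoint from $g^{-1}(X)\setminus L$ and surrounds only the middle of $L$ (Remark~\ref{rem:diskcover}), so removing $L^o$ erases the $W_r$ part entirely; hence $g^{-1}(P_r)\setminus L^o\subseteq U_r\cup V_r$. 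As $U_r$ and $V_r$ are disjoint closed disks for $r<\tfrac12$ and $Q_1$ is connected with $x_0\in U_r$, this forces $Q_1\subseteq U_r$, so $\diam(Q_1)\le 2r$.

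It remains to check that $Q_1$ is actually a neighborhood of $x_0$ in $M$, which I cannot extract from local connectedness of $M$ (that is exactly what is being proved). Instead I would use that the proof of Proposition~\ref{A3} already shows $A_r:=Q_1\cup\{x_0+t{\bf i}:0\le t\le\tfrac12 r\}$ is a neighborhood of $x_0$ in $g^{-1}(X)$. Intersecting with $M$ removes the open-segment part and leaves $A_r\cap M=Q_1$; since an open-in-$g^{-1}(X)$ neighborhood of $x_0$ contained in $A_r$ meets $M$ in an open-in-$M$ neighborhood of $x_0$ contained in $Q_1$, the connected set $Q_1$ is indeed a neighborhood of $x_0$ in $M$. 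Letting $r\to0$, the connected neighborhoods $Q_1\subseteq U_r$ shrink to $x_0$, which proves local connectedness of $M$ at $x_0$, and symmetrically at $x_0+{\bf i}$. Combining the three cases yields local connectedness of $M$ at every point, whence the stated conclusion.
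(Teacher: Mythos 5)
Your proof is correct, and its skeleton is the same as the paper's: Proposition~\ref{A2} gives the dichotomy (at most two components of $M:=g^{-1}(X)\setminus L^o$, each a continuum), Proposition~\ref{A3} settles local connectedness at all points of $M$ other than $x_0$ and $x_0+{\bf i}$, and the whole issue is to produce arbitrarily small connected neighborhoods at these two endpoints. Where you genuinely differ is in how those neighborhoods are built. The paper takes an \emph{arbitrary} connected neighborhood $U_r\subset B(x,r)$ of $x\in\{x_0,x_0+{\bf i}\}$ in $g^{-1}(X)$ (available from the statement of Proposition~\ref{A3} alone) and asserts that $U_r\setminus L^o$ is ``clearly'' still connected and a neighborhood of $x$ in $M$; that connectedness claim is true but requires a short argument (by Remark~\ref{rem:diskcover}, any component of $U_r\cap L^o$ not adherent to $x$ would be clopen in $U_r$, contradicting connectedness of $U_r$, and deleting the one remaining component cannot separate $U_r$). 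You bypass this claim by recycling the explicit objects $N_r$, $P_r$, $Q_1$, $A_r$ from the \emph{proof} of Proposition~\ref{A3}: your $Q_1$ is connected by fiat (it is a component), the chain $g^{-1}(P_r)\subseteq N_r$, then $g^{-1}(P_r)\setminus L^o\subseteq U_r\cup V_r$, then $Q_1\subseteq U_r$ gives the diameter bound (provided one also takes $\delta_r<r/2$ so that $W_r$ misses $x_0$ and $x_0+{\bf i}$ --- worth stating explicitly, though it costs nothing), and the identity $Q_1=A_r\cap M$ together with the fact that $A_r$ is a neighborhood of $x_0$ in $g^{-1}(X)$ gives the neighborhood property. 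What your route buys is that no connectedness assertion is left unjustified; what it costs is dependence on the internals of the proof of Proposition~\ref{A3} rather than only on its statement. Note finally that both arguments conclude local connectedness from the existence of arbitrarily small connected, not necessarily open, neighborhoods at every point (connectedness im kleinen everywhere); this standard implication is used silently in the paper's proof as well as in yours, so it is not a gap specific to your proposal.
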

\begin{proof}
By Proposition \ref{A2}, the compact set $g^{-1}(X)\setminus L^o$ contains at most two components, each of which  is then a  continuum. By Proposition \ref{A3},  $g^{-1}(X)$ is a locally connected continuum. Therefore, each component of $g^{-1}(X)\setminus L^o$ is locally connected at every point $x\notin\{x_0,x_0+{\bf i}\}$.

Given a component $P$ of $g^{-1}(X)\setminus L^o$ and $x$ in $P\cap\{x_0,x_0+{\bf i}\}$, we may choose for any real number $r\in(0,\frac{1}{2})$ a connected neighborhood $U_r$ of $x$ in $g^{-1}(X)$ such that $U_r$ is contained in the open disk of radius $r$ centered at $x$. Clearly, $U_r\setminus L^o$ is a connected set, which is also a neighborhood of $x$ in $g^{-1}(X)\setminus L^o$. Since  $r$ was chosen arbitrarily, we conclude that $P$ is locally connected at $x$.
\end{proof}

\begin{prop}\label{A5}
Let $X$ be a locally connected continuum with $x_0\in X$ and $\overline{g^{-1}(X)\!\setminus\! L}\cap L=\{x_0,x_0+{\bf i}\}$. Suppose that the two components of $g\left(\left\{x_0+\frac{1}{2}{\bf i}+t: |t|\le1\right\}\right)\setminus\{x_0\}$ are contained in two components $U\ne V$ of $\hat{\bbC}\setminus X$. Then $g^{-1}(X)\setminus L^o$ is a locally connected continuum.
\end{prop}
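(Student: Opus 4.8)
The plan is to reduce the statement, via Proposition~\ref{A4}, to the connectedness of $Y:=g^{-1}(X)\setminus L^o$, and to extract that connectedness from the hypothesis $U\ne V$ through a unicoherence argument on the sphere. By Proposition~\ref{A4}, $Y$ is either a single locally connected continuum or a disjoint union of two locally connected continua; by Proposition~\ref{A2} these two components, if present, are distinguished by containing $x_0$ and $x_0+{\bf i}$ respectively. It therefore suffices to prove that $Y$ is connected, and I would do this by contradiction, assuming $Y=Q_1\sqcup Q_2$ with $x_0\in Q_1$ and $x_0+{\bf i}\in Q_2$, two disjoint continua.

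First I would analyse the complement of $Y$ near $L$. From $Y=g^{-1}(X)\setminus L^o$ one gets $\hat{\bbC}\setminus Y=(\hat{\bbC}\setminus g^{-1}(X))\cup L^o$. By Proposition~\ref{A1}, $g^{-1}(X)$ is a continuum, so each component of $\hat{\bbC}\setminus g^{-1}(X)$ is simply connected; moreover, as $g$ is a homeomorphism off $L$ and no complementary component of $X$ contains $x_0$, these components are exactly the preimages $g^{-1}(U_k)$ of the complementary components $U_k$ of $X$. Remark~\ref{rem:diskcover} shows that $L$ is isolated in $g^{-1}(X)$: near any interior point of $L$ the set $g^{-1}(X)$ reduces to $L^o$, so the complement there splits into a left and a right half-disk. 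Sliding along $L^o$ and using connectedness, all left half-disks lie in one complementary component and all right ones in another; the hypothesis identifies these two components, because $g$ maps one half of the segment $\{x_0+\frac{1}{2}{\bf i}+t:|t|\le1\}$ into $U$ and the other half into $V$, so these two halves lie in $g^{-1}(U)$ and $g^{-1}(V)$ respectively. Consequently the only complementary components of $g^{-1}(X)$ abutting $L^o$ are $g^{-1}(U)$ and $g^{-1}(V)$, and the component $O$ of $\hat{\bbC}\setminus Y$ containing $L^o$ equals $O=g^{-1}(U)\cup L^o\cup g^{-1}(V)$.

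The crux is that $O$ is simply connected, and this is the single point at which $U\ne V$ is indispensable: since $U$ and $V$ are distinct components of $\hat{\bbC}\setminus X$, the simply connected domains $g^{-1}(U)$ and $g^{-1}(V)$ are disjoint, and $O$ is their union glued along the free boundary arc $L^o$, which lies in its interior. A van Kampen argument with the pieces $g^{-1}(U)\cup N$ and $g^{-1}(V)\cup N$, where $N$ is a thin contractible neighbourhood of $L^o$ meeting each domain in a collar, gives $\pi_1(O)=1$; equivalently, $\hat{\bbC}\setminus O$ is connected. (Were $U=V$, the set $g^{-1}(U)$ would surround $L$ and $O$ would be annular, which is exactly the configuration allowing $Y$ to disconnect.) Now I would invoke the unicoherence of $S^2$: in $S^2=\overline{O}\cup(\hat{\bbC}\setminus O)$, a union of two continua with intersection $\partial O$, unicoherence forces $\partial O$ to be connected. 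But $\partial O\subset g^{-1}(X)\setminus L^o=Y$ and $\partial O$ contains the two endpoints of $L$, namely $x_0\in Q_1$ and $x_0+{\bf i}\in Q_2$; a connected subset of $Y=Q_1\sqcup Q_2$ cannot meet both disjoint closed pieces. This contradiction proves $Y$ connected, whence, by Proposition~\ref{A4}, $Y$ is a locally connected continuum.

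The main obstacle I anticipate is the middle step: verifying that $O$ is exactly $g^{-1}(U)\cup L^o\cup g^{-1}(V)$ and is simply connected. This is where the local isolation of $L$ must be combined with the global hypothesis $U\ne V$, and where one must check carefully that no further complementary component of $g^{-1}(X)$ touches $L^o$. Once the simple connectedness of $O$ is secured, the conclusion follows formally from the unicoherence of the sphere.
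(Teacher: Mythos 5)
Your proposal is correct, but it follows a genuinely different route from the paper. Both proofs reduce, via Proposition~\ref{A4}, to showing that $Y=g^{-1}(X)\setminus L^o$ is connected, and both argue by contradiction from a splitting $Y=Q_1\sqcup Q_2$ with $x_0\in Q_1$, $x_0+{\bf i}\in Q_2$. From there the paper proceeds very differently: it forms the two continua $A'=Q_1\cup\{x_0+t{\bf i}:0\le t\le\frac12\}$ and $B'=Q_2\cup\{x_0+t{\bf i}:\frac12\le t\le1\}$, which meet only at $x_0+\frac12{\bf i}$, and invokes a separation theorem of Whyburn to produce a simple closed curve $J$ through $x_0+\frac12{\bf i}$, otherwise disjoint from $g^{-1}(X)$, with $A'\setminus\{x_0+\frac12{\bf i}\}$ inside and $B'\setminus\{x_0+\frac12{\bf i}\}$ outside; then $g(J)\setminus\{x_0\}$ is an open arc in $\hat{\bbC}\setminus X$ meeting both $U$ and $V$, contradicting $U\ne V$. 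You construct no separating curve at all: you identify the complementary component $O$ of $Y$ containing $L^o$ as $g^{-1}(U)\cup L^o\cup g^{-1}(V)$, prove it is simply connected (this is where $U\ne V$, i.e.\ the disjointness of $g^{-1}(U)$ and $g^{-1}(V)$, enters), and then use unicoherence of $S^2$ to conclude that $\partial O$ is connected, which is impossible since $\partial O\subset Y$ contains both $x_0$ and $x_0+{\bf i}$. Your steps check out: the identification of $O$ follows from Remark~\ref{rem:diskcover} and the chaining of half-disks along $L^o$; the van Kampen computation works because $N\cap g^{-1}(U)$ and $N\cap g^{-1}(V)$ (the two sides of $N\setminus L^o$) are connected; and $\partial O\subset Y$ holds because $O$ is a component of the open set $\hat{\bbC}\setminus Y$. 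What the comparison buys: the paper's argument is shorter and stays in elementary curve language, but leans on a nontrivial external citation and on the (slightly delicate) fact that $J$ must enter both half-disks at $x_0+\frac12{\bf i}$; your argument is self-contained modulo standard facts (van Kampen, simply connected $\Leftrightarrow$ connected complement in $S^2$, unicoherence of $S^2$) and makes completely explicit where the hypothesis $U\ne V$ is used — if $U=V$ the region $O$ becomes annular and the conclusion genuinely fails. Since Whyburn-type separation theorems are themselves proved from unicoherence/Janiszewski properties of the sphere, the two proofs rest on the same underlying phenomenon, but as written yours is a distinct and valid alternative.
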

\begin{proof}
By Proposition \ref{A4}, we just show that $g^{-1}(X)\setminus L^o$ is connected. Suppose on the contrary that it has two components, say $A$ and $B$ with $x_0\in A$. Let us decompose $g^{-1}(X)$ into the following union
 $$g^{-1}(X)=\underbrace{A\cup\left\{x_0+t{\bf i};0\leq t\leq \frac{1}{2}\right\}}_{\displaystyle=:A'}\;\bigcup\;\underbrace{B\cup\left\{x_0+t{\bf i};\frac{1}{2}\leq t\leq 1\right\}}_{\displaystyle=:B'}.
 $$
Note that $A'\cap B'=\{x_0+{\bf i}/2\}$ and that $x_0+{\bf i}/2$ belongs to the interior of $g^{-1}(U\cup V)\cup L^o$. By a separation theorem given in \cite[p.34]{Why64}, there exists a simple closed curve $J$  such that
$$J\cap \left(g^{-1}(X)\setminus L^o\right)=\{x_0+{\bf i}/2\},\;A'\setminus\{x_0+{\bf i}/2\}\subset\textrm{Int}(J),\;B'\setminus\{x_0+{\bf i}/2\}\subset\textrm{Ext}(J).
$$
Moreover, $J\setminus\{x_0+{\bf i}/2\}$, entirely contained in the complement of $g^{-1}(X)$, intersects both $g^{-1}(U)$ and $g^{-1}(V)$. It follows that $g(J)\setminus\{x_0\}$ is an open arc contained in the complement of $X$, intersecting both $U$ and $V$. This is impossible.
\end{proof}

\bibliographystyle{plain}
\bibliography{biblio}

\end{document}